\newtheorem{theorem}{Theorem}[section]
\newtheorem{proposition}[theorem]{Proposition}
\newtheorem{lemma}[theorem]{Lemma}
\newtheorem{sub-lemma}[theorem]{Sub-Lemma}
\newtheorem{remark}[theorem]{Remark}
\DeclareMathOperator{\esssup}{ess sup}
\DeclareMathOperator{\osc}{osc}
\DeclareMathOperator{\Lip}{Lip}
\let\eps=\varepsilon
\def\1{{{\mathit 1} \!\!\>\!\! I} }
\newcommand{\tpsi}{\tilde\psi}
\begin{document}
\title{On the statistical stability of Lorenz attractors with a $C^{1+\alpha}$ stable foliation}
\author{Wael Bahsoun}
\author{Marks Ruziboev}
\address{Department of Mathematical Sciences, Loughborough University,
Loughborough, Leicestershire, LE11 3TU, UK}
\email{W.Bahsoun@lboro.ac.uk}
\email{M.Ruziboev@lboro.ac.uk}
\thanks{WB and MR would like to thank The Leverhulme Trust for supporting their research through the research grant RPG-2015-346.}
\keywords{Lorenz flow, Statistical stability.}
\subjclass{Primary 37A05, 37C10, 37E05}
\begin{abstract}
We prove statistical stability for a family of Lorenz attractors with a $C^{1+\alpha}$ stable foliation. \end{abstract}
\date{\today}
\maketitle
\markboth{Wael Bahsoun \and Marks Ruziboev}{ Lorenz attractors with a $C^{1+\alpha}$ stable foliation}
\bibliographystyle{plain}
\tableofcontents
\section{Introduction}
In his seminal work \cite{Lo} Lorenz introduced the following system of equations 
\begin{equation}\label{LS}
\begin{cases}
\dot x = -10x+10y \\
\dot y = 28x-y-xz \\
\dot z = -\frac{8}{3}z+xy
\end{cases}
\end{equation}
as a simplified model for atmospheric convection. Numerical analysis performed by Lorenz showed that the above system exhibits sensitive dependence on initial conditions and has a non-periodic ``strange" attractor. A rigorous mathematical framework of similar flows was initiated with the introduction of the so called  geometric Lorenz flow in \cite{ABS, GW}. Nowadays it is well known that the geometric Lorenz attractor, whose vector field will be denoted by $X_0$, is robust in the $C^1$ topology \cite{AP}. This means that vector fields $X_\eps$ that are sufficiently close in the $C^1$ topology to $X_0$ admit invariant contracting foliations $\mathcal F_{\eps}$ on  the Poincar\'e section $\Sigma$ and they admit strange attractors. More precisely, there exists an open neighbourhood $U$ in $\mathbb R^3$ containing $\Lambda$, the attractor of $X_0$, and an open neighbourhood $\mathcal U$ of $X_0$ in the $C^1$ topology such that for all vector fields $X_\eps\in \mathcal U$, the maximal invariant set $\Lambda_{X_\eps}=\cap_{t\ge 0}X_\eps^t(U)$ is a transitive set which is invariant under the flow of $X_\eps$ \cite{MPP}. The papers \cite{Tu1, Tu} provided a computer-assisted proof that the classical Lorenz flow; i.e., the flow defined in \eqref{LS} has a robustly transitive invariant set containing an equilibrium point. In \cite{LMP} it was proved that the Lorenz flow is mixing. Statistical limit laws were first obtained in \cite{HM}. Then rapid mixing for the Lorenz attractor and statistical limit laws for their time-1 maps was obtained in \cite{AMV}. Recently, Ara\'ujo and Melbourne proved in \cite{AM1} that the stable foliation of the Lorenz flow is $C^{1+\alpha}$. Moreover, their methods also imply that $C^1$ perturbations $ X_{\eps}$ of the Lorenz flow admit a $C^{1+\alpha}$ stable foliation and the stable foliation of $X_\eps$ is $C^1$ in $\eps$ (see Theorem 2.2 in \cite{Bort}). Further, in another paper Ara\'ujo and Melbourne \cite{AM2} showed that the Lorenz system is exponentially mixing and that this property is robust in the $C^1$ topology.

\bigskip

In this paper we study a family of perturbations $X_\eps$ which are consistent with the results of \cite{AM1, AM2,  AP, Tu} and prove statistical stability of Lorenz attractors with a $C^{1+\alpha}$ stable foliation. For a precise statement see Theorem \ref{main} in section \ref{setup}. Previous results on the statistical stability of Lorenz attractors was announced in \cite{AS} but only for flows with $C^2$ stable foliations. In \cite{GL} among other things, statistical stability of Poincar\'e maps for `$BV$-like' Lorenz attractors is studied. In our work, we only assume $C^{1+\alpha}$ stable foliation for the family of flows and we prove that the corresponding $1$-d maps are strongly statistically stable. We then obtain statistical stability for the family of flows. Our proofs allow the discontinuities in the base of the corresponding  Poincar\'e maps to change with the perturbation (See Figure \ref{1dfig} for an illustration). In fact, this is the main issue with perturbations of Lorenz systems since the derivative blows up at the discontinuity point. 
\bigskip

The rest of the paper is organised as follows. In section \ref{setup} we introduce the family of flows that we study in this paper. The statement of our main result (Theorem \ref{main}) is also in this section.  In section \ref{pf} we provide proofs of our result in a series of lemmas and propositions.  

\section{Setup and statement of main result}\label{setup}
\subsection{A geometric Lorenz flow with a  $C^{1+\alpha}$ stable foliation}
Let $X_0:\mathbb R^3\to\mathbb R^3$ be a vector field associated with a flow that has an equilibrium point at $0$. We assume that $X_0$ satisfies the following assumptions:\\

\noindent$\bullet$ The differential $DX_0(0)$ has three real eigenvalues $\lambda_2<\lambda_3<0<\lambda_1$, $\lambda_1+\lambda_3>0$ (Lorenz-like singularity) and $\lambda_1+\lambda_2<\lambda_3$\footnote{Note that this condition is required to get the $C^{1+\alpha}$ regularity of the stable foliation for the flow  (see \cite{AM1} section 5).}.\\
$\bullet$ Let
$$
\Sigma :=\left\{(x, y, 1)|-\frac 1 2\le x, y \le\frac 1 2 \right\}
$$
and let $\Gamma=\{(0, y, 1)| -\frac 1 2 \le y \le \frac 1 2\}$ be the intersection of $\Sigma$ with the local stable manifold of the fixed point $0$. 
The segment $\Gamma$ divides $\Sigma$ into two parts
$$
\Sigma^+=\{(x, y, 1)\in \Sigma:\, x>0\}  \,\, \text{and} \,\,   \Sigma^-=\{(x, y, 1)\in \Sigma:\, x<0\}.  
$$
There exists a well defined Poincar\'e map $F:\Sigma\to\Sigma$ such that the images of $\Sigma^{\pm}$ by this map are curvilinear triangles $S^\pm$ as in Figure \ref{Poincare},  without the vertexes $(\pm 1, 0, 1)$ and every line segment in $\mathcal F=\{(x, y, 1) \in \Sigma \mid x=\text{const}\}$ except $\Gamma$ is mapped into a segment $\{(x, y, 1) \in S^\pm \mid  x=\text{const}\}$. The return time $\tau : \Sigma\setminus \Gamma\to \mathbb R$  to $S^\pm$ is given by $\tau(x, y, 1)=-\frac 1 {\lambda_1} \log|x|$. 

\noindent$\bullet$ The flow maps $\Sigma$ into $S^\pm$ in a smooth way so that the Poincar\'e map $F(x, y)= X_0^{\tau(x, y, 1)}((x, y, 1), t)$ has the form 
$$
F(x, y)=(T(x), g(x, y)),
$$

 \begin{itemize}
 	\item $T:I\to I$, where $I:=[-\frac{1}{2},\frac12]$, has discontinuity at $x=0$ with side limits $T(0^+)=-\frac 1 2$ and $T(0^-)=\frac 1 2$;
 	\item $T$ is piecewise monotone increasing and $T$ is $C^1$ on $I\setminus\{0\}$;
	\item $\lim_{x\to 0^\pm}T'(x)=+\infty$;
	\item $\frac{1}{T'}$ is $\alpha$-H\"older  on $I_i$, $i=1,2;$ $I_1:=[-\frac12, 0]$ and $I_2:=[0, \frac12]$ with $0<\alpha\le 1$;  
	\item  There are $C>0$ and $\theta>1$ such that $(T^n)'(x)\ge C\theta^{n}$ for any $n\in \mathbb N$;
 	\item $T$ is transitive; 
 	\item $g$  preserves $\mathcal F$ and it is uniformly contracting; i.e., there exists $K>0$ and $0<\rho<1$ such that for any given leaf $\gamma$ of the foliation and $\xi_1, \xi_2\in\gamma$ and $n\ge 1,$ 
 	$$
 	\text{dist}(F^n(\xi_1), F^n(\xi_2))\le K\rho^n\text{dist}(\xi_1, \xi_2).
 	$$
 	 \end{itemize}
	
	\begin{figure}[H] 
   \centering
   \includegraphics [width=3in]{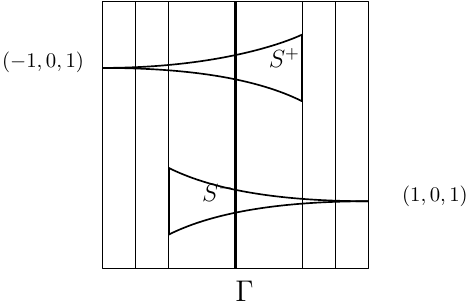} 
   \caption{The images $S^\pm$ of   $\Sigma^\pm$ under the Poincar\'e map.}   \label{Poincare}
\end{figure} 
	 \subsection{Universally bounded $p$-variation} We now define a space that captures the regularity of $\frac{1}{T'}$. For $p\ge 1$, we say $f:I \to \mathbb R$ is a function of universally bounded $p$-variation if 
	$$
	V_p(f):=
	\sup_{-1/2\le x_0<...<x_n\le 1/2}
	\left(
	\sum_{i=1}^{n}\left| f(x_i)-f(x_{i-1})\right|^p
			\right)^{1/p} <+\infty.
	$$ 
	
	The space of universally bounded $p$-variation functions is denoted by $UBV_p(I)$ and it will play a key role in studying perturbations of $X_0$.
\subsection{Perturbations: a family $\mathcal X$ of flows with $C^{1+\alpha}$ stable foliations}\label{familyofflows}
 We now consider perturbations of $X_0$ which are consistent with the results of \cite{AM1, AP}. From now on we are going to set $p:=\frac{1}{\alpha}$. Let $\mathcal X$ be the family of $C^1$ perturbations of $X_0$; i.e., there exists an open neighbourhood $U$ in $\mathbb R^3$ containing $\Lambda$, the attractor of $X_0$, and an open neighbourhood $\mathcal U$ of $X_0$ containing $\mathcal X$ such that 
	\begin{enumerate}
	\item for each $X_\eps\in \mathcal X,$ the maximal forward invariant set $\Lambda_{X_\eps}$ is contained in $U$ and is an attractor containing a hyperbolic singularity; 
	\item for each $X_\eps\in\mathcal X,$ $\Sigma$ is a cross-section for the flow with a return time $\tau_\eps$ and a Poincar\'e map $F_\eps;$
	\item for each $X_\eps\in\mathcal{X},$ the map $F_\eps$ admits a $C^{1+\alpha}$ uniformly contracting invariant foliation $\mathcal F_\eps$  on $\Sigma$;
	\item $F_\eps:\Sigma\to\Sigma$ is given by\footnote{By c) $g_\eps$ is $C^{1+\alpha}$. Moreover, it is a uniform contraction on stable leaves.}
	$$F_\eps(x,y)=(T_\eps(x), g_{\eps}(x,y));$$
	\item \label{expansion} the map $T_\eps:I\to I$ is transitive piecewise $C^{1}$ expanding with two branches and a discontinuity point $O_\eps$ such that $T(O_{\eps}^+)=-\frac12$, $T(O_{\eps}^-)=\frac12$ and $\lim_{x\to O_\eps^\pm} T'_\eps(x)=+\infty$ (see Figure \ref{1dfig} for an illustration);  
\item	for any $\eta>0$ there exists $\eps_0$ and an interval $H(\eps_0)\subset I$ such that for all $0\le \eps<\eps_0$, $\{O_\eps, 0\}\in H,  \,\, |H|=2|O_{\eps_0}| <\eta;$ and 
$$d(T, T_\eps):=\displaystyle\sup_{x\in H^c} \{|T_\eps(x)-T(x)|+|T'_\eps(x)-T'(x)| \} \le C\eta,$$
where $H^c:=I\setminus H$;
	\item there are uniform (in $\eps$ and $x$) constants $C>0$ and $\theta>1$ such that $(T_\eps^n)'(x)\ge C\theta^{n}$ except at the discontinuity point $O_\eps$;
	\item \label{VI} there is a uniform (in $\eps$) constant $W>0$ such that 
	$${\underset{i\in{1,2}}{\max}}V_{p|_{ I_{i,\eps}}}(\frac{1}{T'_{\eps}})\le W,$$
	where $I_{i,\eps}$ is a monotonicity interval of $T_\eps$, and $V_{p}(\cdot)$ is the $p$-variation;
	\item  for  any $n>0$ let $\mathcal P_\eps^{(n)}:=\vee_{j=0}^{\ell-1}T^{-j}_\eps(\mathcal P_\eps)$, where $\mathcal P_\eps=\{I_{1,\eps}, I_{2,\eps}\}$. There exists $\delta_n>0$ independent of $\eps$ such that $\min_{J\in \mathcal P_\eps^{(n)}}|J|\ge \delta_n$;
	\item the return time $\tau_{\eps}:\Sigma\setminus\Gamma_\eps\to\mathbb R$ satisfies the following: there is a constant $C>0$ such that for each $X_\eps\in\mathcal X$
	$$
	\tau_\eps(\xi)\le -C\log|\pi_\eps(\xi)-O_\eps|,
	$$
where $\pi_\eps$ is the projection along the leaves of $\mathcal F_\eps$ onto $I$.
	\end{enumerate}
\begin{figure}[htbp] 
   \centering
   \includegraphics[width=2.5in]{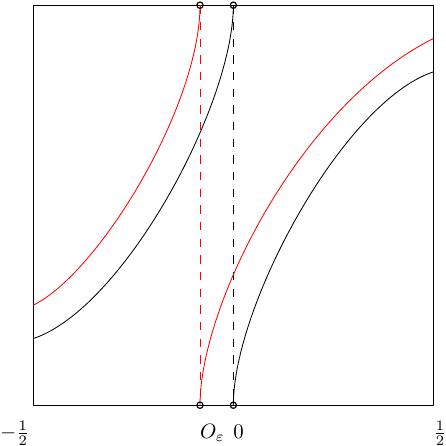} 
   \caption{A graph of $T_{\eps}$ with discontinuity $O_\eps$ versus the graph of $T$ with discontinuity at $0$.}   \label{1dfig}
\end{figure}
\begin{remark}\label{comparison}
In \cite{AS} the authors impose more regularity conditions on the stable foliations and consequently on $T_\eps$. In particular, they assume that $T_\eps$ is piecewise $C^2$. They also relay on the result of \cite{Kel82} which assumes that $T_0$ and $T_\eps$ are close in the Skorohod distance and that the transfer operators admit a uniform, in $\eps$, Lasota-Yorke inequality. See \cite{Kel82} $\S$ 3); in particular the cautionary Remark 15, items (ii) and (iii). In our work, we only assume that the map $T_\eps$ is piecewise $C^1$ (see condition e)) and ${\underset{i\in{1,2}}{\max}}V_{p|_{ I_{i,\eps}}}(\frac{1}{T'_{\eps}})\le W,$ for some $W>0$ (see condition h)). We also assume that the maps are close in sense of assumption f). We would also like to stress that in our setting $\sup_{x\in [-1,O_\eps)}|T'_{\eps}(x)|=\sup_{x\in (O_\eps,1]}|T'_{\eps}(x)|=\infty$ and hence $T'_\eps$ does not admit a uniform H\"older constant on its domain. 
\end{remark}	
Before stating our main result, we define an appropriate Banach space, which was first introduced by Keller \cite{K}, that will play a key role in our analysis.
\subsection{A Banach space} Let $S_\rho(x):=\{y\in I: |x-y|<\rho\}$ and $f:I\to \mathbb R$ be any function defined on $I.$ Let 
$$
 \osc(f, \rho, x):= \esssup\{|f(y_1)-f(y_2)|: y_1, y_2\in S_\rho(x)\}, 
 $$ 
and
$$ \osc_1(f, \rho)=\|\osc(f, \rho, x)\|_1,$$
where the essential supremum is taken with respect to the two dimensional Lebesgue measure on $I\times I$ and $\|\cdot\|_1$ is the $L^1$- norm  with respect to Lebesgue measure on $I.$ Fix $\rho_0>0$ and let $BV_{1, 1/p}\subset L^1$ be the Banach space equipped with the norm
	$$
	\|f\|_{1, 1/p}=V_{1, 1/p}(f)+\|f\|_1,
	$$
	where 
	$$
	 V_{1, 1/p}(f)=\sup_{0<\rho\le \rho_0}\frac{\osc_1(f, \rho)}{\rho^{1/p}}.
	$$
Notice that $V_{1, 1/p}(\cdot)$ depends on $\rho_0$. The fact that  $BV_{1, 1/p}$ is a Banach space is proved in \cite{K}. Moreover, it is proved in \cite{K} that the unit ball of $BV_{1, 1/p}$ is compact in $L^1$. We now list several inequalities, involving functions in  $BV_{1, 1/p}$, that were proved in \cite{K}. For any $p\ge 1$ and $f\in UBV_p(I)$ we have 
\begin{equation}\label{fact2}
 V_{1, 1/p}(f)\le 2^{1/p}V_p(f).
\end{equation}
Moreover,  if $f\in BV_{1, 1/p}$ and  $Y\subset I$ is an interval with $|Y|\ge 4\rho_0$, then  for each $0<\rho\le \rho_0$ we have 
	\begin{equation}\label{fact1}
	\osc_1(f\cdot \mathbf 1_{Y}, \rho)\le 
	\left(2+\frac{8\rho_0}{|Y|-2\rho_0}\right)\int_Y\osc(f|_{Y}, \rho, x)dx+\frac{4\rho}{|Y|}\int_Y|f(x)|dx.
	\end{equation}
Further, if $Y, Z\subset I$ are intervals such that $T:Y\to Z$ is differentiable then 
\begin{align}\label{fact3}
\begin{split}
&\int_Z\osc(\frac{f}{T'}\circ T^{-1}|_{Z}, \rho, z)dx\le \int_Y\osc(f|_Y, (C\theta)^{-1}\rho, y)dy \\
&\hskip 0.6cm+ 5\int_Z\osc(\frac{1}{T'}\circ T^{-1}|_{Z}, \rho, z)dx\left(\frac{1}{|Y|}\int_Y|f|dx+\frac{1}{\rho_0}\int_Y\osc(f|_Y, \rho_0, y)dy\right).
\end{split}
\end{align}

\subsection{Statement of the main results}
We first recall the definition of statistical stability for continuous-time dynamical systems: Let $V$ be a neighbourhood of $0$. Let $(X_\eps)_{\eps\in V}$ be a family of flows which is endowed with some topology $\mathfrak T$.  Assume that every $X_\eps$ admits a unique SRB measure\footnote{For more information about SRB measures, we refer to \cite{Y}.} $\mu_\eps$. The family $(X_\eps)_{\eps\in V}$ is called statistically stable if $\eps\mapsto X_\eps$ is continuous at $\eps=0$ in the weak $\ast$-topology, i.e.  
$$\lim_{\eps\to 0}\int f d\mu_\eps=\int fd\mu_0,$$
for any continuous function $f:\mathbb R^3\to\mathbb R$. Statistical stability is defined analogously for discrete-time dynamical systems. We refer the reader to the articles \cite{A, AV} for more information. 
\begin{theorem}\label{main}
Let $X_\eps \in \mathcal X$. Then
\item[1)] $X_{\eps}$ admits a unique invariant probability SRB measure $\mu_\eps$.
\item[2)] For any continuous $\varphi:\mathbb R^3\to\mathbb R$ we have
$$\lim_{\eps\to 0}\int\varphi d\mu_{\eps}=\int\varphi d\mu;$$
where $\mu$ is the SRB measure associated with $X_0$; i.e. the family $\mathcal X$ is statistically stable.
\end{theorem}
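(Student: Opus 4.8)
The plan is to push the problem down to the one–dimensional factor maps $T_\eps$, where the hypotheses defining $\mathcal X$ were tailored so that the transfer–operator machinery on Keller's space applies uniformly, and then to lift the resulting information back up to the flow through the contracting foliation and a suspension argument.

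First I would study the transfer operators $\mathcal L_\eps\colon L^1(I)\to L^1(I)$ of $T_\eps$ acting on $BV_{1,1/p}$. Using the uniform expansion $(T_\eps^n)'\ge C\theta^n$, the uniform bound $\max_i V_{1/\alpha}(\tfrac1{T_\eps'}|_{I_{i,\eps}})\le W$, the uniform lower bound $\delta_n$ on the elements of $\mathcal P_\eps^{(n)}$, and inequalities \eqref{fact2}, \eqref{fact1}, \eqref{fact3}, one derives a uniform Lasota--Yorke inequality
\[
V_{1,1/p}(\mathcal L_\eps^n f)\le C\,(C\theta)^{-n/p}\,V_{1,1/p}(f)+C_n\|f\|_1 ,
\]
with constants independent of small $\eps$. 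Since the unit ball of $BV_{1,1/p}$ is $L^1$-compact, each $\mathcal L_\eps$ is quasi-compact with essential spectral radius $\le (C\theta)^{-1/p}<1$; transitivity of $T_\eps$ rules out several ergodic components, so $T_\eps$ has a unique absolutely continuous invariant probability measure $h_\eps\,dx$, the eigenvalue $1$ of $\mathcal L_\eps|_{BV_{1,1/p}}$ is simple, and $\|h_\eps\|_{1,1/p}\le C$ uniformly in $\eps$.

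The second step, which I expect to be the main obstacle, is strong statistical stability of the $1$-d maps, i.e.\ $h_\eps\to h_0$ in $L^1$. By the Keller--Liverani perturbation theorem, given the uniform Lasota--Yorke inequality it suffices to show $\sup_{\|f\|_{1,1/p}\le1}\|(\mathcal L_\eps-\mathcal L_0)f\|_1\to0$. I would split $I=H\cup H^c$ with $H=H(\eps_0)$ the short interval containing both $0$ and $O_\eps$, and write $f=f\mathbf 1_H+f\mathbf 1_{H^c}$: the terms $\mathcal L_\eps(f\mathbf 1_H)$ and $\mathcal L_0(f\mathbf 1_H)$ have $L^1$-norm $\lesssim|H|^{1-1/r}\|f\|_{1,1/p}$ (using that $BV_{1,1/p}$ embeds into $L^r$ for some $r>1$), hence are $o(1)$ as $\eta\to0$ uniformly in $f$; while $\|\mathcal L_\eps(f\mathbf 1_{H^c})-\mathcal L_0(f\mathbf 1_{H^c})\|_1\lesssim d(T,T_\eps)\,\|f\|_{1,1/p}\le C\eta\,\|f\|_{1,1/p}$ by a change-of-variables estimate exploiting that $T_\eps$ and $T_0$ are uniformly expanding and $C^1$-close on $H^c$. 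The genuine difficulty here is the one flagged in the introduction: $O_\eps$ moves with $\eps$ and $T_\eps'$ blows up there, so one cannot work in $BV$ or $C^1$ and must use the $1/p$-variation control of $1/T_\eps'$, keeping all bad behaviour confined to the thin strip $H$; this is also the step that yields robustness of the variance in the CLT for the $1$-d maps.

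Finally I would transfer this to the flow. The uniformly contracting foliation $\mathcal F_\eps$ lets one lift $h_\eps\,dx$ to a unique $F_\eps$-invariant SRB measure $\nu_\eps$ on $\Sigma$: disintegrating along the leaves, the conditional measures are the limits of the $F_\eps^n$-pushforwards (well defined because leaves contract), $(\pi_\eps)_*\nu_\eps=h_\eps\,dx$, and uniqueness is inherited from the base. Saturating,
\[
\mu_\eps(\varphi)=\Big(\int_\Sigma\tau_\eps\,d\nu_\eps\Big)^{-1}\int_\Sigma\int_0^{\tau_\eps(\xi)}\varphi(X_\eps^t\xi)\,dt\,d\nu_\eps(\xi)
\]
is the SRB measure of $X_\eps$, with $\int_\Sigma\tau_\eps\,d\nu_\eps\in(0,\infty)$ because $\tau_\eps(\xi)\le-C\log|\pi_\eps(\xi)-O_\eps|$ is integrable against $h_\eps\in BV_{1,1/p}\subset L^r$; this proves part 1). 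For part 2), $h_\eps\to h_0$ in $L^1$ together with the $C^0$-convergence $\pi_\eps\to\pi_0$ and leafwise contraction gives $\nu_\eps\to\nu_0$ weakly on $\Sigma$; since $X_\eps\to X_0$ in $C^1$, the flows converge uniformly on compact time intervals and on compact subsets of a fixed neighbourhood of $\Lambda$, and $\tau_\eps\to\tau_0$ off $H$ while $\{\tau_\eps\}$ is uniformly $\{\nu_\eps\}$-integrable (again by the logarithmic bound and the uniform bound on $\|h_\eps\|_{1,1/p}$). Hence both numerator and denominator in the displayed formula pass to the limit for every continuous $\varphi$, giving $\int\varphi\,d\mu_\eps\to\int\varphi\,d\mu$, which is part 2).
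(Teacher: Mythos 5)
Your proposal is correct and follows essentially the same route as the paper: a uniform Lasota--Yorke inequality for $P_\eps$ on $BV_{1,1/p}$, the Keller--Liverani perturbation theorem applied after splitting $f=f\mathbf 1_H+f\mathbf 1_{H^c}$ to show $|||P_\eps-P_0|||\to0$ and hence $\|h_\eps-h_0\|_1\to0$, and then the lift through the contracting foliation and the suspension formula (the paper carries out the mixed-norm estimate via a duality lemma and delegates the final passage to the flow to Section 3 of \cite{AS}). The only differences are cosmetic, e.g.\ your $|H|^{1-1/r}$ bound for the $H$-part versus the paper's direct $\|f\mathbf 1_H\|_1\le|H|\,\|f\|_{1,1/p}$.
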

\begin{remark}
1) in Theorem \ref{main} is well known. See for instance \cite{APPV}. We prove 2) in the following section.
\end{remark}
\section{Proofs}\label{pf}
\subsection{Statistical stability of the family $\mathcal X$}
Let $\bar\mu_\eps$ and $\bar\mu_0$ denote the unique absolutely continuous invariant measures of the one dimensional maps $T_\eps,T_0$ respectively. Let $h_\eps, h_0$ denote the densities corresponding to $\bar\mu_\eps, \bar\mu_0$ respectively. For $\eps\ge 0$, let
$$P_{\eps}:L^1(I)\to L^1(I)$$ 
denote the transfer operator(Perron-Frobenius) associated with $T_\eps$ \cite{Ba, BG}; i.e., for any $f\in L^1(I)$
$$P_\eps f(x)=\sum_{i=1}^{2}(f\cdot\frac{1}{T'_\eps})(T_\eps|I_{i,\eps})^{-1}(x)\cdot \mathbf{1}_{T_\eps(I_{i,\eps})}(x).$$ 
Our first goal is to prove  that $\lim_{\eps\to 0}||h_\eps-h||_1=0.$ This will be achieved by showing that $P_\eps$, when acting on $BV_{1, 1/p}$, satisfies a uniform (in $\eps)$ Lasota-Yorke inequality and that $\eps\mapsto P_\eps$ is continuous at $\eps=0$ in an appropriate topology. We will be then in a setting where we can apply the spectral stability result of \cite{KL}, and hence achieve our first goal. 
\subsubsection{A uniform Lasota-Yorke inequality} In this subsection, we show that $P_\eps$ admits a uniform (in $\eps$) Lasota-Yorke inequality when acting on $BV_{1,1/p}$. We first start with two lemmas to control, uniformly in $\eps$, the $p$-variation of $\frac{1}{(T^n_\eps)'}$. 
\begin{lemma}\label{Tell}
For any $\ell\in\mathbb N$ 
$$\max_{J\in\mathcal P^{(\ell+1)}_\eps}V_{p| J}\left(\frac{1}{|(T_\eps^{\ell+1})'|}\right)<\frac{W(\ell)}{C\theta^{\ell}},$$
 where $W(\ell):= \frac{C^{\ell}+C-2}{C^{\ell-1}(C-1)}W$ and $W$ is as in assumption $h)$.
\end{lemma}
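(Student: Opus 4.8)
The plan is to induct on $\ell$, using the chain rule to decompose $\frac{1}{(T_\eps^{\ell+1})'}$ along an orbit and then controlling the $p$-variation of a product by the standard rule $V_{p|J}(fg)\le \|f\|_{\infty,J}V_{p|J}(g)+\|g\|_{\infty,J}V_{p|J}(f)$. Writing, for $x$ in an element $J\in\mathcal P_\eps^{(\ell+1)}$,
$$\frac{1}{(T_\eps^{\ell+1})'(x)}=\frac{1}{T_\eps'(x)}\cdot\frac{1}{(T_\eps^{\ell})'(T_\eps x)},$$
I would note that $T_\eps(J)$ lies in a single element $J'\in\mathcal P_\eps^{(\ell)}$, so the second factor, as a function of $x$, is the composition of a $C^1$ diffeomorphism $T_\eps:J\to J'$ with a function whose $p$-variation on $J'$ is controlled by the inductive hypothesis. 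Since $p$-variation is invariant under composition with a homeomorphism (the sup defining $V_p$ is over all partitions, and a monotone change of coordinates merely reparametrises the partition), one gets $V_{p|J}\bigl(\frac{1}{(T_\eps^\ell)'}\circ T_\eps\bigr)=V_{p|J'}\bigl(\frac{1}{(T_\eps^\ell)'}\bigr)$.

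The base case $\ell=0$ (reading $T_\eps^1=T_\eps$, $\mathcal P_\eps^{(1)}=\mathcal P_\eps$) is exactly assumption \eqref{VI}: $\max_i V_{p|I_{i,\eps}}(1/T_\eps')\le W=W(0)\cdot C\theta^0$ — here one checks that $W(1)=W$ with the stated formula, and more generally that $W(\ell)$ satisfies the recursion that will come out of the inductive step. For the inductive step, apply the product rule with $f=1/T_\eps'$, $g=\frac{1}{(T_\eps^\ell)'}\circ T_\eps$ on $J$: the term $\|g\|_{\infty,J}V_{p|J}(f)$ is bounded using the uniform expansion $(T_\eps^\ell)'\ge C\theta^\ell$ (so $\|g\|_\infty\le \frac{1}{C\theta^\ell}$) together with \eqref{VI} restricted to $J\subset I_{i,\eps}$, giving $\le \frac{W}{C\theta^\ell}$; the term $\|f\|_{\infty,J}V_{p|J}(g)$ is bounded by $\frac{1}{C\theta}\cdot V_{p|J'}\bigl(\frac{1}{(T_\eps^\ell)'}\bigr)\le \frac{1}{C\theta}\cdot\frac{W(\ell-1)}{C\theta^{\ell-1}}$ by the inductive hypothesis (using $\|1/T_\eps'\|_\infty\le \frac{1}{C\theta}$ from the $n=1$ case of uniform expansion). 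Collecting terms,
$$V_{p|J}\Bigl(\frac{1}{(T_\eps^{\ell+1})'}\Bigr)\le \frac{W}{C\theta^\ell}+\frac{W(\ell-1)}{C^2\theta^\ell}=\frac{1}{C\theta^\ell}\Bigl(W+\frac{W(\ell-1)}{C}\Bigr),$$
so $W(\ell)=W+W(\ell-1)/C$, and solving this linear recursion with $W(0)$ chosen appropriately yields the closed form $W(\ell)=\frac{C^\ell+C-2}{C^{\ell-1}(C-1)}W$ (a geometric sum); I would verify the formula satisfies the recursion and matches the base case rather than re-deriving it.

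The only genuine subtlety — and the step I would be most careful about — is the claim that composition with the monotone $C^1$ branch $T_\eps$ leaves $p$-variation unchanged, and that the product rule is being applied on the correct partition element $J$ (not on a full monotonicity interval), which is why the partition $\mathcal P_\eps^{(\ell+1)}$ appears: on such $J$ the map $T_\eps^{\ell+1}$ is a genuine $C^1$ diffeomorphism onto its image and all the intermediate images $T_\eps^j(J)$ stay inside single monotonicity intervals, so \eqref{VI} and the expansion estimate apply without interference from the discontinuity at $O_\eps$. Everything else is a bookkeeping of constants. Note the uniformity in $\eps$ is automatic since $W$, $C$, $\theta$ are all $\eps$-independent by hypothesis.
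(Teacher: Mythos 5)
Your proof is correct and follows essentially the same route as the paper's: induction on $\ell$, the chain-rule decomposition $\frac{1}{(T_\eps^{\ell+1})'}=\frac{1}{T_\eps'}\cdot\frac{1}{(T_\eps^{\ell})'\circ T_\eps}$, the product rule for $p$-variation together with invariance of $V_p$ under the monotone reparametrisation $T_\eps:J\to J'$, and the resulting recursion $W(\ell)=W+W(\ell-1)/C$. The only blemish is in your normalisation check: the base case reads $W(0)/(C\theta^0)=W$, i.e.\ $W(0)=CW$ and $W(1)=2W$ (not $W(1)=W$, and not $W=W(0)\cdot C\theta^0$), but this does not affect the argument since the closed form does satisfy your recursion and reproduces the hypothesis \eqref{VI} at $\ell=0$.
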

	\begin{proof}
	The proof is by induction on $\ell$. 
	Conclusion holds for $\ell=0$  by h).  Suppose it holds for $\ell$. 
	Since  $J\in \mathcal P_{\eps}^{(\ell+1)}$ is a subset of some  $I_{i, \eps} \in\mathcal  P_\eps$  and  $T_{\eps}(J)\in \mathcal P_\eps^\ell$, using the standard properties of variation 
	we get:
	\begin{equation*}\begin{split}
	V_p\left(\frac{1}{|(T^{\ell+1}_\eps)'|}\right)
	&\le 
	\sup_{x\in I} \frac{1}{|T_\eps'|}V_p\left(\frac{1}{|(T_\eps^\ell)'\circ T_\eps|}\right)+
	\sup_{x\in I}\left(\frac{1}{|(T_\eps^\ell)'\circ T_\eps|}\right)V_p\left(\frac{1}{|T_\eps'|}\right)\\
	&\le 
	\frac{1}{C\theta}V_p\left(\frac{1}{|(T_\eps^\ell)'\circ T_\eps|}\right)+ \frac{1}{C\theta^\ell}W\\
	&\le	\frac{1}{C\theta}\cdot\frac{1}{C\theta^{\ell-1}}\cdot\frac{C^{\ell-1}+C-2}{C^{\ell-2}(C-1)}W+\frac{1}{C\theta^{\ell}}W\\
	&\le\frac{1}{C\theta^\ell}\frac{C^\ell+C-2}{C^{\ell-1}(C-1)}W.
	\end{split}\end{equation*}
	\end{proof}	
\begin{lemma}\label{pvariation}
	Fix $\ell$ such that  $\bar\theta:=C\theta^{\ell-1}>1$. 
	Then for any $\eps$ and  $n$   the following holds
	$$
	\max_{J\in\mathcal P^{(n\ell)}} V_{p | J}\left(\frac{1}{(T^{n\ell}_\eps)'}\right)\le 
	\frac{n}{\bar\theta^{n}}W(\ell-1),
	$$
	where $W(\ell)$ is as in Lemma \ref{Tell}.
	\end{lemma}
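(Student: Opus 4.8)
## Proof plan for Lemma \ref{pvariation}

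The plan is to iterate Lemma \ref{Tell} in blocks of length $\ell$. The key observation is that once we fix $\ell$ with $\bar\theta := C\theta^{\ell-1} > 1$, Lemma \ref{Tell} (applied with parameter $\ell-1$, so that the exponent $C\theta^{\ell-1}$ in its denominator reads $\bar\theta$) gives
$$
\max_{J \in \mathcal P_\eps^{(\ell)}} V_{p|J}\left(\frac{1}{|(T_\eps^{\ell})'|}\right) < \frac{W(\ell-1)}{\bar\theta}
$$
uniformly in $\eps$. This is the base case $n=1$ (up to the harmless constant factor, since $1/\bar\theta^1 = 1/\bar\theta$). For the inductive step, I would write $T_\eps^{(n+1)\ell} = T_\eps^{n\ell} \circ T_\eps^{\ell}$ — or rather use the cocycle identity $\frac{1}{(T_\eps^{(n+1)\ell})'(x)} = \frac{1}{(T_\eps^{\ell})'(x)} \cdot \frac{1}{(T_\eps^{n\ell})'(T_\eps^{\ell} x)}$ — and apply the same product-rule estimate for $p$-variation used in the proof of Lemma \ref{Tell}:
$$
V_p(fg) \le \|f\|_\infty V_p(g) + \|g\|_\infty V_p(f)
$$
on each element $J \in \mathcal P_\eps^{((n+1)\ell)}$, noting that such a $J$ is contained in an element of $\mathcal P_\eps^{(\ell)}$ and that $T_\eps^{\ell}(J)$ is contained in an element of $\mathcal P_\eps^{(n\ell)}$.

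Carrying this out, on a given $J \in \mathcal P_\eps^{((n+1)\ell)}$ I would take $f = \frac{1}{|(T_\eps^{\ell})'|}$ and $g = \frac{1}{|(T_\eps^{n\ell})'|} \circ T_\eps^{\ell}$, so that
$$
V_{p|J}\left(\frac{1}{|(T_\eps^{(n+1)\ell})'|}\right) \le \sup_J \frac{1}{|(T_\eps^{\ell})'|} \cdot V_{p|J}(g) + \sup_J |g| \cdot V_{p|J}\left(\frac{1}{|(T_\eps^{\ell})'|}\right).
$$
Using the uniform expansion bound g) to get $\sup_J \frac{1}{|(T_\eps^{\ell})'|} \le \frac{1}{C\theta^{\ell}} \le \frac{1}{\bar\theta}$ and $\sup_J |g| \le \frac{1}{C\theta^{n\ell}} \le \frac{1}{\bar\theta^n}$, together with the composition bound for variation ($V_{p|J}(h \circ T_\eps^{\ell}) \le V_{p|T_\eps^{\ell}(J)}(h)$, since $T_\eps^{\ell}$ is monotone on $J$) and the inductive hypothesis $V_{p|T_\eps^{\ell}(J)}\big(\frac{1}{(T_\eps^{n\ell})'}\big) \le \frac{n}{\bar\theta^n} W(\ell-1)$, I obtain
$$
V_{p|J}\left(\frac{1}{|(T_\eps^{(n+1)\ell})'|}\right) \le \frac{1}{\bar\theta} \cdot \frac{n}{\bar\theta^n} W(\ell-1) + \frac{1}{\bar\theta^n} \cdot \frac{W(\ell-1)}{\bar\theta} = \frac{n+1}{\bar\theta^{n+1}} W(\ell-1),
$$
which closes the induction. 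One should also record the trivial sign remark that $\frac{1}{(T_\eps^{n\ell})'}$ and $\frac{1}{|(T_\eps^{n\ell})'|}$ have the same $p$-variation on each monotonicity interval since $T_\eps$ is increasing on each branch, so the absolute values can be dropped as in the statement.

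The only genuinely delicate point is bookkeeping with the partition structure and with which $\ell$ parameter goes into $W(\cdot)$: one must check that every $J \in \mathcal P_\eps^{((n+1)\ell)}$ indeed lies in a single monotonicity interval of $T_\eps^{\ell}$ (so that the composition inequality and the $\sup$ bounds are legitimate) and that $T_\eps^{\ell}(J)$ is an element — or at least a subset of an element — of $\mathcal P_\eps^{(n\ell)}$; this is immediate from the definition $\mathcal P_\eps^{(m)} = \vee_{j=0}^{m-1} T_\eps^{-j}(\mathcal P_\eps)$. The uniformity in $\eps$ is inherited entirely from the uniform constants $W$ (assumption \ref{VI}), $C$, $\theta$ (assumption g)) and requires no further work. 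Everything else is the routine product-and-chain-rule estimate for $p$-variation already deployed in Lemma \ref{Tell}.
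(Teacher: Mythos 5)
Your proposal is correct and follows essentially the same route as the paper: induction on $n$ over blocks of length $\ell$, with the base case supplied by Lemma \ref{Tell} (read off at parameter $\ell-1$ so the denominator is $\bar\theta$) and the inductive step carried out via the product rule for $p$-variation applied to $\frac{1}{(T_\eps^{\ell})'}\cdot\big(\frac{1}{(T_\eps^{n\ell})'}\circ T_\eps^{\ell}\big)$ on each $J\in\mathcal P_\eps^{((n+1)\ell)}$, together with the uniform expansion bounds. The only cosmetic difference is that you induct from $n$ to $n+1$ while the paper goes from $n-1$ to $n$; the estimates and the bookkeeping with $\mathcal P_\eps^{(\cdot)}$ are the same.
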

	
	\begin{proof}
	The proof is again  by induction on $n.$ For $n=1$, by Lemma \ref{Tell}, we have 
	$$
	\max_{J\in\mathcal P^{(\ell)}_\eps}V_{p | J}\left(\frac{1}{|(T_\eps^{\ell})'|}\right)\le W(\ell-1)/(C\theta^{\ell-1})=W(\ell-1)/\bar{\theta}.
	$$
Suppose that
$$
\max_{J\in \mathcal P^{((n-1)\ell)}_\eps}V_{p | J}\left(\frac{1}{|(T_\eps^{(n-1)\ell})'|}\right)\le
	\frac{n-1}{\bar\theta^{n-1}}W(\ell-1).
	$$		
Let $J$ be any interval in $\mathcal P^{(n\ell)}_\eps$ then $T_\eps^\ell J\in\mathcal P^{(n-1)\ell}_\eps$. Hence, we get: 
\begin{equation*}\begin{aligned}
	V_{p, J}\left(\frac{1}{|(T_\eps^{n\ell})'|}\right)
	&\le 
	\sup_{x\in J} \frac{1}{|(T^{\ell}_\eps)'|}V_{p, J}\left(\frac{1}{|(T_\eps^{(n-1)\ell)})'\circ T_\eps^\ell|}\right) \\
	& +\sup_{x\in J}\left(\frac{1}{|(T_\eps^{(n-1)\ell})'\circ T_\eps|}\right)V_{p, J}\left(\frac{1}{|(T^\ell_\eps)'|}\right)\\
	&\le \frac{1}{C\theta^\ell}\frac{n-1}{\bar\theta^{n-1}}W(\ell-1)+
	\frac{1}{(C\theta^\ell)^{n-1}}\frac{1}{\bar\theta}W(\ell-1)\\
		&\le\frac{n-1}{\bar\theta^{n}}W(\ell-1)+\frac{1}{\bar\theta^{n}}W(\ell-1) =\frac{n}{\bar\theta^{n}}W(\ell-1).
	\end{aligned}\end{equation*}	
\end{proof}
\begin{lemma}\label{uniform} 
There exists\footnote{All the constants in this lemma are independent of $\eps$.} $\rho_0$, $0<A_1, A_2<\infty$, $0<\kappa<1$, such that for any $0<\rho<\rho_0$, $n\in\mathbb N$ and for any  $f\in BV_{1, 1/p}$ the following holds 
\begin{equation*}
||P^n_\eps f||_{1,1/p}\le A_1\kappa^n||f||_{1, 1/p}+A_2||f||_1.
\end{equation*}
\end{lemma}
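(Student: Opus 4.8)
The plan is to establish the inequality first for a high iterate $T_\eps^{N}$ (with $N=n\ell$ a multiple of the $\ell$ fixed in Lemma \ref{pvariation}) and then bootstrap to all $n$ in the standard way. Fix $\rho_0>0$ small; the key quantity to bound is $V_{1,1/p}(P_\eps^{N}f)=\sup_{0<\rho\le\rho_0}\osc_1(P_\eps^{N}f,\rho)/\rho^{1/p}$. Since $T_\eps^{N}$ has finitely many monotonicity branches, partitioned by $\mathcal P_\eps^{(N)}$, I write $P_\eps^{N}f=\sum_{J\in\mathcal P_\eps^{(N)}}\big(\frac{f}{(T_\eps^{N})'}\circ(T_\eps^{N}|_J)^{-1}\big)\mathbf 1_{T_\eps^{N}(J)}$. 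The oscillation of a sum is bounded by the sum of oscillations, so it suffices to estimate $\osc_1$ of each term. For each fixed branch $J$ I apply \eqref{fact1} to localize to the image interval $T_\eps^{N}(J)$ (whose length is bounded below by $\delta_N$ from assumption (9), so that the factor $8\rho_0/(|T_\eps^{N}(J)|-2\rho_0)$ is controlled once $\rho_0<\delta_N/4$), and then \eqref{fact3} to pull the oscillation back through $T_\eps^{N}$. The contraction factor $(C\theta)^{-1}$ in the argument of the oscillation in \eqref{fact3}, applied $N$ times, is what produces the eventual gain; the terms in \eqref{fact3} involving $\osc(\frac{1}{(T_\eps^{N})'}\circ(T_\eps^{N})^{-1},\rho,z)$ are controlled by the $p$-variation bound $\frac{n}{\bar\theta^{n}}W(\ell-1)$ from Lemma \ref{pvariation} together with \eqref{fact2}, and the leftover $\int|f|$ and $\int\osc(f,\rho_0,\cdot)$ terms feed into the $\|f\|_1$ and $V_{1,1/p}(f)$ parts of the estimate.

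Carrying this out, one should arrive at an inequality of the shape
$$
V_{1,1/p}(P_\eps^{N}f)\le \Big(\tfrac{(C\theta)^{-N/p}}{\text{(const)}}+\tfrac{nW(\ell-1)}{\bar\theta^{n}}\cdot C'\Big)V_{1,1/p}(f)+C''_N\|f\|_1,
$$
where the constants are uniform in $\eps$ precisely because \emph{all} of the hypotheses (7), (8), (9) — uniform expansion, uniform $p$-variation bound $W$, uniform lower bound $\delta_n$ on partition elements — are $\eps$-independent, and the inequalities \eqref{fact1}--\eqref{fact3} hold with $\eps$-independent constants. Now choose $n$ (hence $N=n\ell$) large enough that the coefficient of $V_{1,1/p}(f)$ is some $\kappa_0<1$; since $\bar\theta>1$ the term $nW(\ell-1)/\bar\theta^{n}\to 0$, and $(C\theta)^{-N/p}\to 0$ as well, so such a choice exists. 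Using $\|P_\eps g\|_1\le\|g\|_1$ and the fact that the remaining finitely many iterates $P_\eps^{j}$, $0\le j<N$, are bounded on $BV_{1,1/p}$ with $\eps$-independent bound (again by one application of \eqref{fact1}--\eqref{fact3} with the trivial branch estimates), a routine splitting of an arbitrary $n$ as $qN+r$ upgrades the $N$-step inequality to
$$
V_{1,1/p}(P_\eps^{n}f)\le A_1\kappa^{n}V_{1,1/p}(f)+A_2\|f\|_1
$$
with $\kappa:=\kappa_0^{1/N}<1$ and $A_1,A_2$ uniform in $\eps$; adding $\|P_\eps^{n}f\|_1\le\|f\|_1\le\|f\|_{1,1/p}$ gives the stated bound on $\|P_\eps^{n}f\|_{1,1/p}$.

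The main obstacle I expect is the careful bookkeeping around the \emph{blow-up of the derivative at the discontinuity $O_\eps$} and the fact that $O_\eps$ moves with $\eps$. The inequality \eqref{fact3} is stated for a single differentiable branch, so the decomposition over $\mathcal P_\eps^{(N)}$ must be organized so that no single application of \eqref{fact3} straddles $O_\eps$; this is fine because $O_\eps$ is an endpoint of partition elements, but one must check that the constants emerging from \eqref{fact1} (which depend on $|T_\eps^{N}(J)|$) stay uniform — this is exactly where assumption (9), $\min_J|J|\ge\delta_N$, is essential, and one must verify $\rho_0$ can be chosen once and for all (depending on $\delta_N$ for the finitely many relevant $N$, hence ultimately on the single $N$ fixed above). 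The unbounded derivative itself is not an obstruction to the \emph{variation} estimates because everything is phrased in terms of $1/T_\eps'$, which is H\"older and has uniformly bounded $p$-variation by (8); the delicate point is merely ensuring that summing the branch-by-branch estimates does not introduce an $\eps$-dependent or $N$-dependent multiplicative loss that would destroy the contraction, which is handled by the geometric decay in Lemma \ref{pvariation} beating the linear growth in $n$.
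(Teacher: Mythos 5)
Your proposal is correct and follows essentially the same route as the paper's proof: a branch-by-branch estimate built from \eqref{fact1}--\eqref{fact3}, with Lemma \ref{pvariation} supplying the uniform $p$-variation control of $1/(T_\eps^{n\ell})'$ and the $\eps$-independent hypotheses g)--i) giving uniformity of all constants, followed by the standard splitting $n=qN+r$ and adding the $L^1$ part. The only cosmetic difference is that the paper first records the $n=1$ inequality as a template and then applies it to $(P_\eps^{\ell})^{k}$, whereas you go directly to the high iterate; the substance is identical.
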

\begin{proof}
We first obtain an inequality for $n=1$. Let  
\begin{align*}
\underline K:=\inf_{0\le\eps< \eps_0}\{|T_{\eps}I_{1, \eps}|, |T_{\eps}I_{2, \eps}|\}.
\end{align*}
Fix $\rho_0$ such that
\begin{equation}\label{ch.ofepsilon}
\rho_0< \underline K/{10}.
\end{equation}

Since $\esssup(f+g)\le \esssup(f)+\esssup(g)$ we have 
\begin{equation}\label{oscestimate}
\begin{aligned}
&\osc_1(P_\eps f, \rho) \le \sum_{i=1}^{2}\osc_1\left(f \cdot \frac{1}{T_\eps'}\circ(T_\eps|I_{i, \eps})^{-1}\cdot \mathbf{1}_{T_\eps (I_{\eps, i}),}\,\,  \rho \right)\\
&\le \sum_{i=1}^{2}
\left(
    \left(2+\frac{8\rho_0}{|T(I_{i, \eps})|-2\rho_0}
    \right)  \right.
   \int_{T_\eps(I_{i, \eps})}\osc(\frac{f}{T_\eps'}\circ(T_\eps|I_{i, \eps})^{-1}|_{T_\eps(I_{i, \eps})}, \rho, x)dx   
 \\
& \left.
   \hskip 5.5cm+\frac{4\rho}{|T_\eps(I_{i, \eps})|}\int_{T_\eps(I_{i, \eps})}|f\circ (T|I_{i, \eps})^{-1}|dx\right).
       \end{aligned}
       \end{equation}
The last inequality follows from inequality \eqref{fact1}. Now, using the notation $T_\eps|I_{i, \eps}=T_{i, \eps}$  and  change of variable formula we have 
\begin{equation}\label{changeofvar}
\int_{T(I_{i, \eps})}\left(|f|\circ T_{i, \eps}^{-1}\right)
\left(\frac{1}{T_{i, \eps}'}\circ T_{i, \eps}^{-1}\right)dx=\int_{I_{i, \eps}}|f|dx.
\end{equation}
Inequality \eqref{ch.ofepsilon} implies that 
\begin{equation}\label{deltave0}
2+\frac{8\rho_0}{|T_\eps(I_{i, \eps})|-2\rho_0} <  3.
\end{equation} 
On the other hand from \eqref{fact3} it follows that  
\begin{equation}\label{oscsemifinal}
\begin{aligned}
\int_{T_\eps (I_{i, \eps})}\osc\left(\frac{f}{T_\eps'}\circ T_{i, \eps}^{-1}|_{T_\eps(I_{i, \eps})}, \rho, x\right)dx \le 
\int_{I_{i, \eps}}\osc(f|_{I_{i, \eps}},  (C\theta)^{-1}\rho, x)dx  \\
+5\int_{T_\eps (I_{i, \eps})}\osc \left(\frac{1}{T_{i, \eps}'}\circ T^{-1}_{i, \eps}|_{T_\eps (I_{i, \eps})}, \rho, x\right) dx \\
\cdot \left(\frac{1}{|I_{i, \eps}|}\int_{I_{i, \eps}}|f|dx+\frac{1}{\rho_0}\int_{I_{i, \eps}}\osc(f|_{I_{i, \eps}}, \rho_0, y)dy
\right).
\end{aligned}\end{equation}
Using the relation between $L^p$ norms\footnote{ $\| f \|_1 \le  \mu(X)^{1-1/p}\|f\|_p$, where $X$ is the space and $\mu$ is a measure on it.}, the definition of $V_{1, 1/p}(\cdot)$ and \eqref{fact2} lead to
\begin{equation*}
\begin{aligned}
&\int_{T_\eps (I_{i, \eps})}\osc \left(\frac{1}{T_{i, \eps}'}\circ T^{-1}_{i, \eps}|_{T_\eps (I_{i, \eps})}, \rho, x\right) dx\\
&\le \left(\int_{T_\eps (I_{i, \eps})}\osc \left(\frac{1}{T_{i, \eps}'}\circ T^{-1}_{i, \eps}|_{T_\eps I_{i, \eps}}, \rho, x\right) dx \right)^{1/p} |T_\eps (I_{i, \eps})|^{1-1/p} \\
&\le \rho^{1/p}V_{1, 1/p}\left(\frac{1}{T_{i, \eps}'}\circ T^{-1}_{i, \eps}|_{T(I_{i, \eps})}\right)\le (2\rho)^{1/p}V_p\left(\frac{1}{T_{i, \eps}'}\circ T^{-1}_{i, \eps}|_{T(I_{i, \eps})}\right)\\
& \le (2\rho)^{1/p}V_p\left((T_{i, \eps}')^{-1}|_{I_{i, \eps}}\right).
\end{aligned}\end{equation*}
Therefore,
\begin{equation}\label{oscfinal}
\osc_1\left(\frac{1}{T_{i, \eps}'}\circ T_{i, \eps}^{-1}|_{T_\eps (I_{i,\eps})}, \rho\right)\le 2\rho^{1/p}W.
\end{equation}
Substituting equation \eqref{oscfinal} first into \eqref{oscsemifinal}  and then substituting  \eqref{changeofvar},  \eqref{deltave0} and \eqref{oscsemifinal} into  \eqref{oscestimate} and using property h) gives 
\begin{align*}
\osc_1(P_\eps f, \rho)& \le  3\sum_{i=1}^{2}\int_{I_i}\osc(f|_{I_{i, \eps}}, (C\theta)^{-1}\rho, y)dy\\
&+30\rho^{1/p}W\sum_{i=1}^2\left(\frac{1}{|I_{i, \eps}|}\int_{I_{i, \eps}}|f|dm+\frac{1}{\rho_0}\int_{I_{i, \eps}}\osc(f|_{I_{i, \eps}}, \rho_0, y)dy
\right)\\
&+\sum_{i=1}^{2} \frac{4\rho}{|T_\eps(I_{i, \eps})|}\int_{I_{i, \eps}}|f|dx\\
&\le 3\int_I \osc(f,  \frac{\rho}{C\theta}, x)dx \\
&+30W\rho^{1/p}
\left(
\frac{1}{\delta_1}\int_{I}|f|dx+\frac{1}{\rho_0}\int_I\osc(f, \rho_0, x) dx
\right)+ \frac{4\rho}{ \underline K }\int_I|f|dx.
\end{align*}
Therefore,  
\begin{equation}\begin{aligned}
\frac{\osc_1(P_\eps f, \rho)}{\rho^{1/p}}\le \left(\frac{3}{C\theta}
+30 W\rho^{1/p-1}\right)V_{1, 1/p}(f)\\
+\left(30 W\frac{1}{\delta_1}+\frac{4\rho^{1-1/p}}{ \underline K }\right)\|f\|_1.
\end{aligned}\end{equation}
Consequently, we have
\begin{equation}\label{step1}
\begin{split}
V_{1, 1/p}(P_\eps f)
&\le \left(\frac{3}{C\theta}
+30 W\rho^{1/p-1}\right)V_{1, 1/p}(f)\\
&\hskip 4cm+\left(30 W\frac{1}{\delta_1}+\frac{4\rho^{1-1/p}}{ \underline K }\right)\|f\|_1.
\end{split}
\end{equation}
We now prove an inequality for all $n$ as stated in the lemma. Fix $\ell\in\mathbb N$ such that $\bar\theta:=C\theta^{\ell-1}>1$. By Lemma \ref{pvariation} and \eqref{step1} applied to $(P_\eps^\ell)^k$ we get
\begin{equation}\label{step2}
\begin{aligned}
V_{1, 1/p}((P_\eps^\ell)^k f)
\le \left(\frac{3}{\bar\theta^k}
+\frac{30}{\bar\theta^{k}} k W(\ell-1)\rho^{1/p-1}\right)V_{1, 1/p}(f)+\\
+\left(30 W(\ell-1)\frac{1}{\bar\theta^{k}\delta_{k\ell}}+\frac{4\rho^{1-1/p}}{ \underline K_{k\ell} }\right)\|f\|_1,
\end{aligned}\end{equation}
where $\underline K_{k\ell}:=\sup_{0<\eps< \eps_0}\min_{J\in \mathcal P_\eps^{(k\ell)}}\{|T_{\eps}^{k\ell} J|\}$ and $\delta_{k\ell}>0$ by assumption i). 
Since $\ell$ is fixed we can choose $k$ large enough so that 
$$
\beta:=\left(\frac{3}{\bar\theta^k}
+\frac{30}{\bar\theta^{k}} k W(\ell-1)\rho^{1/p-1}\right)<1
$$
and let 
$$K:=\left(30 W(\ell-1)\frac{1}{\bar\theta^{k}\delta_{k\ell}}+\frac{4\rho^{1-1/p}}{ \underline K_{k\ell} }\right).$$
Thus, we have
\begin{equation}\label{ULYforTell}
V_{1, 1/p}((P_\eps^\ell)^k f)\le \beta V_{1, 1/p}(f)+K\| f\|_1.
\end{equation}
Similar to \eqref{step2}, by using \eqref{step1} and Lemma \ref{pvariation}, for any $j\in \mathbb N$ we have
 \begin{equation}\label{step3}
\begin{aligned}
V_{1, 1/p}(P_\eps^j f)\le \left(\frac{3}{(C\theta)^j}
+30 \frac{W(j-1)}{C\theta^{j-1}}\rho^{1/p-1}\right)V_{1, 1/p}(f)\\
+\left(30\frac{W(j-1)}{C\theta^{j-1}}\frac{1}{\delta_j}+\frac{4\rho^{1-1/p}}{ \underline K_j }\right)\|f\|_1.
\end{aligned}\end{equation}
Set $k_0=\ell k$, where $k$ and $\ell$ are chosen so that $P_\eps^{k_0}$ satisfies \eqref{step2}. Then for any $n\in\mathbb N$ 
we can write $n=k_0m+j$ for some   $j=1, ..., k_0-1.$ Applying \eqref{ULYforTell} consecutively implies
\begin{align*}
V_{1, 1/p}(P^n_\eps f) &=V_{1, 1/p}((P_\eps^{k_0})^m\circ  P_\eps^jf)\le 
\beta  V_{1, 1/p}((P_\eps^{k_0})^{m-1}\circ P_\eps^j f)+K\|f\|_1 \\ &\le ...\le
\beta^mV_{1, 1/p}(P_\eps^j f)+K\frac{\beta}{1-\beta}\|f\|_1.
\end{align*}
Using \eqref{step3} and setting  
$$
A_1:=\max_{1\le j\le k_0}\left\{\frac{3}{(C\theta)^j}
+30 \frac{W(j-1)}{C\theta^{j-1}}\rho^{1/p-1}\right\}\cdot\beta^{-j/k_0},
$$  
$$
A_2:=\max_{1\le j\le k_0}\left\{30\frac{W(j-1)}{C\theta^{j-1}}\frac{1}{\delta_j}+
\frac{4\rho^{1-1/p}}{ \underline K_j }\right\}+K\frac{\beta}{1-\beta} +1, \text{ and } \kappa:=\beta^{1/k_0},
$$
we obtain
$$
||P^n_\eps f||_{1,1/p} \le A_1\kappa^n ||f ||_{1, 1/p}+A_2\|f\|_1.
$$ 
\end{proof}
\subsubsection{Estimating the difference of the transfer operators in the mixed norm}
Define the following operator `mixed' norm: 
$$|||P_\eps|||:= \sup_{ \|f\|_{1, 1/p} \le 1}||P_\eps f||_1.$$
To apply the spectral stability result of  \cite{KL}, we still need to prove that $\lim_{\eps\to 0}|||P_\eps-P|||=0.$ Firstly, we start with a simple lemma that is similar\footnote{Lemma 11 in \cite{Kel82} was proved for $f\in BV$, the space of one dimensional functions of bounded variation. Here we deal with functions in $BV_{1, 1/p}$. To keep the paper self contained, we include a proof.} to Lemma 11 in \cite{Kel82}.  
\begin{lemma}\label{skeller} 
For any $f \in BV_{1, 1/p}(I)$ and $u\in L^1(I)$ 
$$
\left| \int_{I} f\cdot u dx \right| \le (1+\rho_0^{1/p}) \|f\|_{1, 1/p} \sup_{z\in I} \int_{x\le z} u(x)dx.
$$
\end{lemma}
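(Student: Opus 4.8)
The plan is to follow the proof of Lemma~11 in \cite{Kel82}, the only genuinely new point being that a function in $BV_{1,1/p}$ need not be of bounded variation, so one cannot Stieltjes--integrate $f$ by parts against the primitive of $u$ directly; this obstruction will be removed by a mollification at the fixed scale $\rho_0$, which is also where the factor $\rho_0^{1/p}$ in the constant comes from.

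First I would introduce the primitive $U(z):=\int_{\{x\le z\}}u(x)\,dx$, $z\in I$. Since $u\in L^1(I)$, $U$ is absolutely continuous, $U(-\tfrac12)=0$, $U'=u$ a.e., and $M:=\sup_{z\in I}\big|\int_{x\le z}u\,dx\big|=\|U\|_\infty$; in particular $|U(\tfrac12)|=|\int_I u|\le M$. Next I would peel off the mean $\bar f:=\int_I f\,dx$ (recall $|I|=1$): one has $\big|\int_I\bar f\,u\,dx\big|=|\bar f|\,|U(\tfrac12)|\le\|f\|_1\,M$, which already yields the term carrying the coefficient $1$, so it remains to bound $\big|\int_I g\,u\,dx\big|$ for $g:=f-\bar f$, a mean--zero function with $V_{1,1/p}(g)=V_{1,1/p}(f)$.

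For the mean--zero part I would mollify $g$ at scale $\rho_0$: let $g^{\ast}$ be the local average of $g$ over balls of radius $\rho_0$ (truncated to $I$, or extended by reflection near $\partial I$, so that $g^{\ast}$ is Lipschitz on $I$ and $\int_I g^{\ast}=0$). From $|g^{\ast}-g|\le\osc(g,\rho_0,\cdot)$ a.e.\ and the very definition of $V_{1,1/p}$ one gets $\|g-g^{\ast}\|_1\le\osc_1(g,\rho_0)\le\rho_0^{1/p}V_{1,1/p}(g)$, and a parallel computation (the mollifier's derivative is $\tfrac1{\rho_0}$ times a difference of values of $g$ over a distance $\le\rho_0$) controls $\int_I|(g^{\ast})'|$ by oscillations of $g$. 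Writing $\int_I gu=\int_I(g-g^{\ast})u+\int_I g^{\ast}u$, the second integral is handled by integration by parts against the primitive $\widetilde U$ of $u-\int_I u$, which is chosen to vanish at both endpoints of $I$ (so the boundary terms disappear) and still satisfies $\|\widetilde U\|_\infty\le 2M$, using $\int_I g^{\ast}(u-\int_I u)=\int_I g^{\ast}u$ since $\int_I g^{\ast}=0$; this leaves $\|\widetilde U\|_\infty\int_I|(g^{\ast})'|$. The first integral is the approximation error: when $u$ is bounded (which is the situation in every application of the lemma, where $u$ has the form $h\circ T_\eps-h\circ T$) it is immediately $\le\|g-g^{\ast}\|_1\|u\|_\infty\le\rho_0^{1/p}V_{1,1/p}(g)\|u\|_\infty$, and for general $u\in L^1$ one iterates the splitting at the scales $\rho_0 2^{-k}$ and sums the resulting geometric series. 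Collecting the three contributions and using $\|f\|_1,V_{1,1/p}(f)\le\|f\|_{1,1/p}$ and $\rho_0<1$ then produces a bound of the asserted form $(1+\rho_0^{1/p})\|f\|_{1,1/p}M$.

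The main obstacle is precisely the one indicated at the outset: $f\in BV_{1,1/p}$ need not lie in $BV$, so the naive Stieltjes integration by parts $\int_I f\,dU=[fU]_{\partial I}-\int_I U\,df$ is not available, and the mollification $g^{\ast}$ (equivalently, a summation by parts over a partition of mesh $\rho_0$) is the substitute; the care needed is in tracking the various powers of $\rho_0$ and the endpoint terms generated by the mollifier near $\partial I$ so that they assemble into the clean constant, and in making the definitions so that the error $\|f-f^{\ast}\|_1$ is paired against exactly the quantity $\sup_z|\int_{x\le z}u|$ rather than against $\|u\|_1$.
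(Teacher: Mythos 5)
Your overall strategy (mollify $f$ at the scale $\rho_0$, integrate the smooth part by parts against a primitive of $u$, and treat the remainder as an error) is genuinely different from the paper's, which discretizes $u$ instead: the paper proves the estimate for $u$ a simple function adapted to a partition $\{J_i\}$, writes $\int fu=\sum_i y_i(G(a_i)-G(a_{i-1}))$ with $y_i$ an average value of $f$ on $J_i$ and $G$ the primitive of $u$, performs an Abel summation, and bounds $\sum_i|y_i-y_{i+1}|$ by the oscillation seminorm. The virtue of that discrete route is that \emph{every} term is automatically paired with a value of $G$, i.e.\ with $M:=\sup_{z}\big|\int_{x\le z}u\,dx\big|$; this is exactly the point at which your proposal breaks down.

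Concretely, there are two gaps. First, you bound the error term $\int_I(g-g^{\ast})u\,dx$ by $\|g-g^{\ast}\|_1\|u\|_\infty$. But the right-hand side of the lemma involves $M$, not $\|u\|_\infty$, and $\|u\|_\infty$ is not controlled by $M$; nor is it small in the application, where $u$ is essentially a difference of sign functions, so $\|u\|_\infty\approx 1$ while $M=O(\eta^\alpha)$ is the small quantity. An additive term $\rho_0^{1/p}V_{1,1/p}(f)\,\|u\|_\infty$ is therefore not of the required form and would make the lemma useless in the proof of Lemma \ref{triplemorm0}. Your proposed repair for general $u\in L^1$ --- iterating the splitting at scales $\rho_k=\rho_0 2^{-k}$ --- does not converge: integrating $\int\big(g^{\ast,\rho_{k+1}}-g^{\ast,\rho_k}\big)u$ by parts costs $2M\int\big|\big(g^{\ast,\rho_k}\big)'\big|\le C M\rho_k^{1/p-1}V_{1,1/p}(f)$, and $\sum_k\rho_k^{1/p-1}=\rho_0^{1/p-1}\sum_k 2^{k(1-1/p)}$ diverges whenever $p>1$; for $f\in BV_{1,1/p}$ not of bounded variation the total variation of the mollifications genuinely blows up at this rate, so no cancellation rescues the series. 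Second, even the main term produces the wrong constant: $\big|\int g^{\ast}u\big|\le 2M\int|(g^{\ast})'|$ with $\int|(g^{\ast})'|\le\tfrac{1}{2\rho_0}\osc_1(g,2\rho_0)\le C\rho_0^{1/p-1}V_{1,1/p}(f)$, so the coefficient you obtain is of order $\rho_0^{1/p-1}\ge 1$, not $\rho_0^{1/p}$. The constant mismatch alone would be harmless for the intended use, but the first gap is fatal to the argument as written; the remedy is to abandon the mollification of $f$ and instead discretize $u$ (equivalently, perform summation by parts over a partition on which $u$ is constant), so that the quantity being summed is the variation of the block averages of $f$ --- controlled by $\osc_1(f,\cdot)$ and hence by $\rho_0^{1/p}V_{1,1/p}(f)$ --- and is always multiplied by values of $G$.
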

\begin{proof}
We prove the lemma when $u$ is a simple function. Then general case follows, since $BV_{1, 1/p}\subset L^1$ and any $L^1$ function can be approximated by a sequence of simple functions. Let $-\frac{1}{2}=a_0 < a_2 < ... < a_n=\frac{1}{2}$ be a partition of $I$ and suppose that $u$ is constant on each $J_i=(a_{i-1}, a_{i}),$ $i=1, 2, ..., n.$ Let $\overline{\text{conv}(f(J_i))}$ denote the closure of the convex hull of $f(J_i)$ and set $G(x):=\int_{-\frac{1}{2}}^{x} udx$. Then 
\begin{equation*}\begin{aligned}
&\left| \int f\cdot udx \right| =\left| \sum_{i=1}^n \int_{J_i} f\cdot udx \right| = \left| \sum_{i=1}^n u|_{J_i} \int_{J_i} fdx \right| \\
&=\left| \sum_{i=1}^n u|_{J_i} y_i\right| \quad \text{where}  \quad y_i\in \overline{\text{conv}(f(J_i))}
\\
&=\left| \sum_{i=1}^n y_i \int_{J_i}  udx \right|=\left| \sum_{i=1}^n y_i(G(a_i)-G(a_{i-1}))\right| \\
&\le \sum_{i=2}^n |y_i-y_{i+1}||G(a_i)| +|G(-1/2)y_1|+|G(1/2)y_n|
\\
&\le \sup_{z\in I}|G(z)|\sum_{i=1}^n |y_i-y_{i+1}| +\|f\|_{\infty}\sup_{z\in I}|G(z)|.
\end{aligned}\end{equation*}
In the last inequality we used the facts $G(-1/2)=0$ and $G(1/2)\le \sup_{z\in I}|G(z)|.$ 
By definition of $\osc(f, \rho, x)$ we have
$$
|y_i-y_{i-1}|\le  \osc(f|_{J_i}, 1, z)
$$
which implies 
$$
\int \sum_{i=1}^n |y_i-y_{i+1}|dx\le \osc_1(f, 1) \le \rho_0^{1/p}V_{1, 1/p}(f).
$$
Substituting the latter into above equation finishes the proof.
\end{proof}
\begin{lemma}\label{triplemorm0} 
$$
\lim_{\eps\to 0}|||P_\eps-P_0|||=0.
$$
\end{lemma}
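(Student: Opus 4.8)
The plan is to bound $\|(P_\eps-P_0)f\|_1$ for $\|f\|_{1,1/p}\le 1$ by a quantity tending to $0$, uniformly in $f$. Fix a small $\eta>0$ and take $\eps_0$ and the interval $H=H(\eps_0)$ from the defining properties of $\mathcal X$, so that $|H|<\eta$, $\{O_\eps,0\}\subset H$ for all $0\le\eps<\eps_0$, and on $H^c=I\setminus H$ both maps are $C^1$-close: $|T_\eps-T_0|+|T_\eps'-T_0'|\le C\eta$ there. Note that $H^c$ splits into two closed subintervals $J_1,J_2$, each independent of $\eps$ and each contained in a single monotonicity interval of $T_\eps$ and of $T_0$ (because $H$ contains both discontinuities $0$ and $O_\eps$). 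Hence $(P_\eps-P_0)f=(P_\eps-P_0)(f\mathbf 1_H)+\sum_{i=1,2}(P_\eps-P_0)(f\mathbf 1_{J_i})$, and I would estimate the three pieces separately.

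For the $H$-piece, since a Perron--Frobenius operator is an $L^1$-contraction, $\|(P_\eps-P_0)(f\mathbf 1_H)\|_1\le 2\|f\mathbf 1_H\|_1$, and Lemma \ref{skeller} (used with $u=\mathbf 1_H v$, $|v|\le 1$) gives $\|f\mathbf 1_H\|_1\le(1+\rho_0^{1/p})|H|\,\|f\|_{1,1/p}\le C\eta\|f\|_{1,1/p}$; this uses only $|H|<\eta$, not any regularity of $T_\eps$ near the discontinuity.

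For a $J_i$-piece (the heart of the matter): on $J_i$ the maps $T_\eps$ and $T_0$ are genuine $C^1$-diffeomorphisms onto their images, so a direct computation with the transfer operator formula shows
\[
P_\eps(f\mathbf 1_{J_i})=\mathcal L_{\Theta_i}\!\bigl(P_0(f\mathbf 1_{J_i})\bigr),\qquad
\Theta_i:=T_\eps\circ(T_0|_{J_i})^{-1}\colon T_0(J_i)\to T_\eps(J_i),
\]
where $\mathcal L_{\Theta_i}w=(w\circ\Theta_i^{-1})\,|(\Theta_i^{-1})'|$ is the transfer operator of the diffeomorphism $\Theta_i$. From $|T_\eps-T_0|\le C\eta$ on $H^c$ and $T_0'\ge C\theta$ one gets $\|\Theta_i-\mathrm{id}\|_\infty\le C\eta$ and $\|\Theta_i'-1\|_\infty\le C\eta$, i.e. $\Theta_i$ is $C^1$-close to the identity. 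So the $J_i$-estimate reduces to a uniform sub-lemma: if $\Theta$ is an increasing $C^1$-diffeomorphism of an interval onto its image with $\|\Theta-\mathrm{id}\|_\infty+\|\Theta'-1\|_\infty\le\delta\le\rho_0$, then $\|\mathcal L_\Theta w-w\|_1\le C\delta^{1/p}\|w\|_{1,1/p}$. Granting this and recalling $p=1/\alpha$, one gets $\|(P_\eps-P_0)(f\mathbf 1_{J_i})\|_1\le C\eta^{\alpha}\|P_0(f\mathbf 1_{J_i})\|_{1,1/p}$; by the $n=1$ case of Lemma \ref{uniform} (with $\eps=0$) together with the elementary bound $\|f\mathbf 1_{J_i}\|_{1,1/p}\le C\|f\|_{1,1/p}$ (which follows from \eqref{fact1}, valid since $|J_i|$ stays bounded below once $\eta,\rho_0$ are small), this is $\le C\eta^{\alpha}\|f\|_{1,1/p}$. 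Summing over $i$ and adding the $H$-piece yields $\|(P_\eps-P_0)f\|_1\le C\eta^{\alpha}\|f\|_{1,1/p}$, hence $|\|P_\eps-P_0\||\le C\eta^{\alpha}$ for all $\eps<\eps_0(\eta)$; letting $\eta\downarrow 0$ finishes the proof.

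To prove the sub-lemma, write $\mathcal L_\Theta w-w=(w\circ\Theta^{-1}-w)(\Theta^{-1})'+w\big((\Theta^{-1})'-1\big)$ on the overlap of the domain and the range, plus boundary terms supported on their symmetric difference, which has measure $\le 2\delta$. The boundary terms and the term $w\big((\Theta^{-1})'-1\big)$ are each $\le C\delta\|w\|_{1,1/p}$ (the boundary ones again by Lemma \ref{skeller}, the other because $|(\Theta^{-1})'-1|\le 2\delta$). For $\int|w\circ\Theta^{-1}-w|\,(\Theta^{-1})'$, the substitution $x=\Theta^{-1}(y)$ makes the Jacobian cancel \emph{exactly}, leaving $\int|w(x)-w(\Theta(x))|\,dx\le C\,\osc_1(w,\delta)\le C\delta^{1/p}V_{1,1/p}(w)$ since $|\Theta(x)-x|\le\delta\le\rho_0$. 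This exact cancellation is the whole point and is where I expect the only genuine difficulty: a naive pointwise comparison of $P_\eps f$ with $P_0 f$ leaves stray factors of $T_\eps'$, which are \emph{unbounded} near the moving discontinuity $O_\eps$ (indeed $\|T_\eps'\|_{L^\infty(H^c)}\to\infty$ as $\eta\downarrow 0$) and would destroy the uniform bound; routing the comparison through $P_0(f\mathbf 1_{J_i})$ and a near-identity diffeomorphism keeps every estimate in the intrinsic $L^1$ and $BV_{1,1/p}$ norms. The rest — checking the identity $P_\eps(f\mathbf 1_{J_i})=\mathcal L_{\Theta_i}(P_0(f\mathbf 1_{J_i}))$ and the bookkeeping of the boundary terms — is routine.
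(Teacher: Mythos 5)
Your argument is correct in substance, and the outer decomposition $f=f\mathbf 1_H+f\mathbf 1_{J_1}+f\mathbf 1_{J_2}$ together with the $O(\eta)$ bound on the $H$-piece coincides with the paper's; the divergence is in how the term $(P_\eps-P_0)(f\mathbf 1_{H^c})$ is estimated. The paper proceeds by duality: taking $u=\mathrm{sgn}\bigl(P(\mathbf 1_{H^c}f)-P_\eps(\mathbf 1_{H^c}f)\bigr)$, rewriting the $L^1$-norm as $\int\mathbf 1_{H^c}f\,(u\circ T-u\circ T_\eps)\,dx$, applying Lemma \ref{skeller}, changing variables $y=T(x)$ and $y=T_\eps(x)$ in each branch, and splitting the resulting integrals over $T_i(H_z^c)\cap T_{i,\eps}(H_z^c)$ and the two symmetric differences; the main term $E_1$ is then controlled by the pointwise $\alpha$-H\"older norm $\|1/T'\|_\alpha$ of the unperturbed map together with $d(T,T_\eps)\le C\eta$, yielding the same $O(\eta^\alpha)$ rate. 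You instead exploit the exact conjugation $P_\eps(f\mathbf 1_{J_i})=\mathcal{L}_{\Theta_i}\bigl(P_0(f\mathbf 1_{J_i})\bigr)$ with $\Theta_i=T_\eps\circ(T_0|_{J_i})^{-1}$ a near-identity diffeomorphism (the identity checks out, and $\|\Theta_i-\mathrm{id}\|_\infty+\|\Theta_i'-1\|_\infty\le C\eta$ follows from property f) and $T_0'\ge C\theta$), and you charge the $\eta^{1/p}=\eta^\alpha$ loss to the $BV_{1,1/p}$-regularity of $P_0(f\mathbf 1_{J_i})$ supplied by the $n=1$ case of Lemma \ref{uniform}, rather than to the regularity of $1/T_0'$. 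This buys something: your route needs only the $UBV_p$ hypothesis h) through the Lasota--Yorke inequality and isolates all the analysis in a reusable sub-lemma about near-identity changes of variable, whereas the paper's $E_1$ estimate genuinely invokes the H\"older continuity of $1/T_0'$. One point you should make explicit: the inequality $\int|w(\Theta(x))-w(x)|\,dx\le C\,\osc_1(w,\delta)$ is not automatic, because $\osc(w,\rho,x)$ is defined via an essential supremum over $S_\rho(x)\times S_\rho(x)$ and the graph of $\Theta$ is Lebesgue-null in $I\times I$; one must fix a representative of $w$ whose value at each point lies between its essential one-sided limits (standard for Keller's space, and harmless since the $L^1$-norm is representative-independent). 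The paper's duality argument sidesteps this because the only pointwise evaluations it performs are of the continuous function $1/T_0'$ and of $u$ at identical points $y$ in both integrals.
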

\begin{proof} 
%
For any $f \in {BV}_{1, 1/p}$ we have 
\begin{equation}\label{triplenorm}\begin{aligned}
&\|P_\eps f -Pf \|_1 \le  \|P_\eps f -P_{\eps}(\textbf{1}_{H^c}f) \|_1 +\|P_{\eps}(\textbf{1}_{H^c} f) -P(\textbf{1}_{H^c}f) \|_1\\
&\hskip 6cm+\|P(\textbf{1}_{H^c} f) -Pf \|_1.
\end{aligned}\end{equation}
We first estimate the first and the last term in \eqref{triplenorm}. By linearity of $P$ we have 
\begin{equation}\begin{aligned}\label{PTh-PT}
\|P(\textbf{1}_{H^c} f )-Pf \|_1&= \| P(\textbf{1}_{H^c}f- f)\|_1\le\| \textbf{1}_{H}f\|_1\\
&\le \|f\|_{1, 1/p}|H|\le 2|O_{\eps_0}| \|f\|_{1, 1/p} \le  \eta\|f\|_{1, 1/p}.
\end{aligned}
\end{equation}
Similarly, for the first term we have 
\begin{equation}\label{PTh-PTeps}
 \|P_\eps f -P_{\eps}(\textbf{1}_{H^c}f )\|_1\le \eta\|f\|_{1, 1/p}.
\end{equation}
It remains to prove that the second term in  equation \eqref{triplenorm} goes to zero as $\eps\to 0.$  Let $u=\text{sgn}(P_T({\bf 1}_{H^c}f)-P_{T_\eps}({\bf 1}_{H^c}f))$. Using the dual operators of $P$, $P_\eps$ and Lemma \ref{skeller}, we have: 
\begin{equation}\begin{aligned}\label{PT-PTeps}
\|P(\textbf{1}_{H^c} f) -P_{\eps}(\textbf{1}_{H^c}f) \|_1 =
\left |\int u(P(\textbf{1}_{H^c} f) -P_{\eps}(\textbf{1}_{H^c}f))dx\right | \\
=\left |\int \textbf{1}_{H^c} u\circ T f dx- \int \textbf{1}_{H^c}u\circ T_\eps  f dx\right | =
\left |\int \textbf{1}_{H^c} f (u\circ T-u\circ T_\eps)dx\right | \\
\le (1+\rho_0^{1/p})\|f\|_{1, 1/p}\sup_{z}\int_{-1/2}^z{\bf 1}_{H^c}(u\circ T-u\circ T_\eps)dx \\
= 
(1+\rho_0^{1/p})\|f\|_{1, 1/p}\sup_{z}\int_{[-1/2, z]\cap H^c}(u\circ T-u\circ T_\eps)dx \\
=(1+\rho_0^{1/p})\|f\|_{1, 1/p}\sup_{z}\sum_{i=1}^2\left( \int _{T_i(H_z^c)}\frac{u(y)}{T'(T^{-1}_i(y))}dy-
\int_{T_{i, \eps}(H_z^c)}\frac{u(y)}{T_\eps'(T^{-1}_{i, \eps}(y))} dy\right)
\end{aligned}\end{equation}
where $H_z^c=[-1/2, z]\cap H^c$ and we used change of variables $y=T(x)$ and $y=T_\eps(x)$ for the first and second summands respectively. For $i=1, 2$ we have 
\begin{equation}\begin{aligned}\label{intt-intteps}
&\int _{T_i(H_z^c)}\frac{u(y)}{T'(T^{-1}_i(y))}dy-
\int_{T_{i, \eps}(H_z^c)}\frac{u(y)}{T_\eps'(T^{-1}_{i, \eps}(y))} dy \\
&= 
\int _{T_i(H_z^c)\cap T_{i, \eps}(H_z^c)}
\left(\frac{u(y)}{T'(T^{-1}_i(y))} - \frac{u(y)}{T_\eps'(T^{-1}_{i, \eps}(y))} \right)dy \\
&+\int _{T_i(H_z^c)\setminus T_{i, \eps}(H_z^c)}\frac{u(y)}{T'(T^{-1}_i(y))}dy-
\int _{T_{i, \eps}(H_z^c)\setminus T_{i}(H_z^c)} \frac{u(y)}{T_\eps'(T^{-1}_{i, \eps}(y))} dy\\
&:= E_1+E_2+E_3
\end{aligned}\end{equation}
Now we estimate each of the terms in  the right hand side separately. Using  $T'_\eps\ge  C\theta$ and the fact that $||u||_{\infty}\le 1$ we have 
\begin{equation*}\begin{aligned}
E_1 &:= \left| \int _{T_i(H_z^c)\cap T_{i, \eps}(H_z^c)}
\left(\frac{u(y)}{T'(T^{-1}_i(y))}  -  \frac{u(y)}{T_\eps'(T^{-1}_{i, \eps}(y))} \right)dy \right|\\
&\le \int_{T_i(H_z^c)\cap T_{i, \eps}(H_z^c)}\left|\frac{1}{T'(T^{-1}_i(y))}  -  \frac{1}{T'(T^{-1}_{i, \eps}(y))} \right|dy \\
&+ \int_{T_i(H_z^c)\cap T_{i, \eps}(H_z^c)}\left|\frac{1}{T'(T^{-1}_{i, \eps}(y))}  -  \frac{1}{T_\eps'(T^{-1}_{i, \eps}(y))} \right|dy\\
&\le\|\frac{1}{T'}\|_\alpha \int _{T_i(H_z^c)\cap T_{i, \eps}(H_z^c)}|T^{-1}_{i, \eps}(y)- T^{-1}_i(y)|^\alpha dy \\
&+\frac{1}{(C\theta)^2}\int _{T_i(H_z^c)\cap T_{i, \eps}(H_z^c)}|T_\eps'(T^{-1}_{i, \eps}(y))- T'(T^{-1}_{i, \eps}(y))|dy \\
&\le \|\frac1{T'}\|_{\alpha}\int_{T_i(H_z^c)\cap T_{i, \eps}(H_z^c)}|T^{-1}_{i, \eps}(y)- T^{-1}_i(y)|^\alpha dy\\
&+ \frac{1}{(C\theta)^2}|T_i(H_z^c)\cap T_{i, \eps}(H_z^c)|d(T, T_\eps).
\end{aligned}\end{equation*}
Notice that for $y\in T(H^c_z)\cap T_\eps(H_z^c)$  there exists $x\in  H_z^c$ such that $T_\eps(x)=y$ 
we have 
\begin{equation}\label{t-teps}
|T^{-1}_{i, \eps}(y)- T^{-1}_i(y)|= |x-T_i^{-1}(T_\eps(x))|\le\Lip (T^{-1}_i)d(T, T_\eps) \le \frac{1}{ C\theta}d(T, T_\eps).
\end{equation}
Taking into account the fact that $|T_i(H_z^c)\cap T_{i, \eps}(H_z^c)|\le 1$  the relation \eqref{t-teps} implies that 
$$
E_1 \le C_2\eta^\alpha.
$$
Now note that by assumption f) for any $x\in H^c$ we have $|T_\eps(x)-T(x)|< C\eta$. This implies 
$|T_\eps(H^c) \bigtriangleup T(H^c)| \le 2C\eta$. 
Hence, for all sufficiently small $\eps$ we have  
$$
E_2:=\left| \int _{T_i(H_z^c)\setminus T_{i, \eps}(H_z^c)}\frac{u(y)}{T'(T^{-1}_i(y))}dy \right|
 \le  \frac{1}{C\theta} |T_i(H_z^c)\setminus T_{i, \eps}(H_z^c)|  =C_3\eta.
$$ 
Similarly, 
$$
E_3:= \left| \int _{T_{i, \eps}(H_z^c)\setminus T_{i}(H_z^c)} \frac{u(y)}{T_\eps'(T^{-1}_{i, \eps}(y))} dy \right| \le C_3\eta.
$$
Substituting estimates for $E_1$, $E_2$ and $E_3$ first into equation \eqref{intt-intteps}  and  then substituting the result into \eqref{PT-PTeps} gives  
$$
\|P(\textbf{1}_{H^c} f) -P_{\eps}(\textbf{1}_{H^c}f) \|_1 \le  (1+\rho_0^{1/p})(2C_2+4C_3)\|f\|_{1, 1/p}\eta^{\alpha}.
$$
Substituting this and equations \eqref{PTh-PT} and \eqref{PTh-PTeps} into  \eqref{triplenorm}  implies
$$
\|P_\eps f -Pf \|_1\le C\eta^\alpha\|f\|_{1, 1/p}
$$
which finishes the proof. 
 \end{proof}
 We are now ready to prove that the $1$-d family $T_\eps$ is strongly statistically stable. Firstly, we set some notation. Consider the set
$$V_{\delta,r}(P)=\{z\in\mathbb C: |z|\le r\text{ or dist}(z,\sigma(P))\le\delta\},$$
where $\sigma(P)$ is the spectrum of $P$ when acting on $BV_{1,1/p}$.
 \begin{proposition}\label{1dstability}
$$ \lim_{\eps\to 0}||h_{\eps}-h||_1=0.$$
 \end{proposition}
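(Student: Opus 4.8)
The plan is to combine the two main ingredients established above --- the uniform Lasota--Yorke inequality of Lemma \ref{uniform} and the convergence of transfer operators in the mixed norm from Lemma \ref{triplemorm0} --- and feed them into the Keller--Liverani spectral stability theorem \cite{KL}. First I would record the setting: each $P_\eps$ acts on the Banach space $(BV_{1,1/p}, \|\cdot\|_{1,1/p})$, which embeds compactly into $(L^1, \|\cdot\|_1)$; by Lemma \ref{uniform} there are constants $A_1, A_2, \kappa$ independent of $\eps$ with $\|P_\eps^n f\|_{1,1/p} \le A_1\kappa^n\|f\|_{1,1/p} + A_2\|f\|_1$, so the family $\{P_\eps\}$ satisfies a uniform Lasota--Yorke (Doeblin--Fortet) bound, and each $P_\eps$ is a bounded operator on $L^1$ with $\|P_\eps\|_{L^1\to L^1} = 1$. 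These are exactly hypotheses (1)--(3) of \cite{KL}, and Lemma \ref{triplemorm0} supplies the remaining hypothesis $|\|P_\eps - P_0\|| \to 0$ as $\eps\to 0$.

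Next I would invoke the Keller--Liverani theorem to conclude that the spectral projections converge: for $\delta$ small and $r \in (\kappa, 1)$, the part of $\sigma(P_\eps)$ outside $V_{\delta, r}(P_0)$ consists of finitely many eigenvalues of finite multiplicity, and the corresponding spectral projections $\Pi_\eps$ converge to $\Pi_0$ in the mixed norm, i.e. $\|(\Pi_\eps - \Pi_0)f\|_1 \to 0$ uniformly on the unit ball of $BV_{1,1/p}$. Since $T_0$ (indeed each $T_\eps$) is a transitive piecewise expanding map, $1$ is a simple isolated eigenvalue of $P_0$, the peripheral spectrum is just $\{1\}$, and the corresponding one-dimensional eigenprojection is $\Pi_0 f = (\int f\, dx)\, h_0$. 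By the perturbation result, for $\eps$ small $P_\eps$ also has a simple eigenvalue near $1$; but $P_\eps$ preserves $\int (\cdot)\, dx$ and is a Markov operator with a fixed density $h_\eps$, so this eigenvalue is exactly $1$ and $\Pi_\eps f = (\int f\, dx)\, h_\eps$. Applying the projection convergence to the constant function $f \equiv 1$ (which has $\|1\|_{1,1/p} < \infty$ and $\int 1\, dx = 1$) yields $\|h_\eps - h_0\|_1 = \|(\Pi_\eps - \Pi_0)\mathbf 1\|_1 \to 0$.

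The main obstacle, and the point that needs the most care, is verifying that the spectral perturbation actually pins the leading eigenvalue at $1$ with a genuinely one-dimensional eigenspace, rather than merely producing eigenvalues in a small neighborhood of $1$. This requires: (i) that $1$ is simple and isolated in $\sigma(P_0)$ on $BV_{1,1/p}$ --- which follows from transitivity of $T_0$ together with the quasi-compactness furnished by the Lasota--Yorke bound, standard for piecewise expanding maps; (ii) ruling out that the perturbed leading eigenvalue splits or moves off $1$ --- here one uses that each $P_\eps$ is positive, preserves Lebesgue integral, and has operator norm $1$ on $L^1$, so its spectral radius is $1$ and $h_\eps$ (which exists by Theorem \ref{main}(1), or equivalently by the uniform Lasota--Yorke bound and compactness) is a fixed point, forcing the near-$1$ eigenvalue to equal $1$; and (iii) that the perturbed peripheral eigenprojection is still rank one --- which follows from \cite{KL} since the total rank of the spectral projection for eigenvalues near $1$ is preserved for small $\eps$. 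I would state these points explicitly, citing \cite{KL} for the abstract stability and the standard theory (e.g. \cite{Ba, BG}) for quasi-compactness and simplicity, and then the conclusion $\|h_\eps - h\|_1 \to 0$ is immediate.
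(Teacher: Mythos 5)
Your proposal is correct and follows essentially the same route as the paper: both invoke the Keller--Liverani spectral stability theorem with the uniform Lasota--Yorke inequality of Lemma \ref{uniform} and the mixed-norm convergence of Lemma \ref{triplemorm0} as inputs, then deduce convergence of the rank-one spectral projections at the eigenvalue $1$ and hence $\|h_\eps-h\|_1\to 0$. The additional care you take in pinning the perturbed eigenvalue at $1$ and confirming the rank is a welcome elaboration of details the paper leaves implicit, but it is not a different argument.
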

 \begin{proof}
 By Lemma \ref{uniform} and Lemma \ref{triplemorm0}, for any $z\in V_{\delta,r}(P)$ the Keller-Liverani \cite{KL} stability result implies
 $$\lim_{\eps \to 0}|||(z I-P_{\eps})^{-1}-(z I-P)^{-1}|||=0.$$
 Consequently,
 $$\lim_{\eps \to 0}|||\Pi_{1,\eps}- \Pi_{1}|||=0,$$
 where $\Pi_{1,\eps}$ and $\Pi_{1}$ are the spectral projections of $P_\eps$ and $P$ associated with the eigenvalue $1$. This completes the proof since both $\Pi_{1,\eps}$ and $\Pi_{1}$ have rank $1$. 
 \end{proof}
\subsubsection{Statistical stability: from the 1-d family, to the Poincar\'e maps, to the family of flows}\label{ssfrom1d}
We now discuss how to obtain continuity of the SRB measures (3) of Theorem \ref{main}) from Proposition \ref{1dstability}. 
We first show how the absolutely continuous invariant measures of the family of $1$-d maps are related to the SRB measures of the family of the flows via the Poincar\'e maps. This construction is well known (see for instance \cite{APPV}). Let $\tpsi: \Sigma\to \mathbb R$ be any bounded function. Notice that $\Sigma$ is foliated by stable manifolds, and any $x\in I$ defines unique stable manifold $\pi^{-1}(x)$. Therefore $\psi^{+}_\eps:I\to \mathbb R$ and $\psi^{-}_\eps:I\to \mathbb R$ are well defined by 
$$
\psi^{+}_{\eps}(x):=\sup_{\xi\in\pi_{\eps}^{-1}(x)}\tpsi(\xi) \quad\text{and} \quad \psi_{\eps}^{-}(x):=\inf_{\xi\in\pi_{\eps}^{-1}(x)}\tpsi(\xi).
$$
There exists a unique  $F_\eps$-invariant probability measure $\mu_{F_\eps}$ on $\Sigma$ such that for every continuous function $\tpsi:\Sigma \to \mathbb R$
	$$
	\int \tpsi d\mu_{F_\eps}=\lim_{n\to \infty}\int_{I}(\tpsi\circ F_\eps^n)^{+}_\eps d\bar\mu_\eps=\lim_{n\to \infty}\int_{I}(\tpsi\circ F_\eps^n)^{-}_\eps d\bar\mu_\eps,
	$$
where $\bar\mu_\eps$ is the $T_\eps$-invariant absolutely continuous measure (see for instance, Lemma 6.1, \cite{APPV}). To pass from the Poincar\'e map to the flow we use standard  procedure: first consider suspension flow from the Poincar\'e map and  then embed the suspension flow into the original flow. To apply the  construction we first need to prove the following 
\begin{lemma}
 For every  $X_\eps\in\mathcal X$  let $F_\eps:\Sigma\to \Sigma$ be its Poincar\'e map and define $\mu_{F_\eps}$ as above. Then $\tau_\eps$ is $\mu_{F_\eps}$- integrable.
\end{lemma}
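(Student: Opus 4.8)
The plan is to show $\int_\Sigma \tau_\eps \, d\mu_{F_\eps} < \infty$ by exploiting the logarithmic bound on the return time together with the fact that the invariant measure $\bar\mu_\eps$ of the $1$-d map $T_\eps$ has a density $h_\eps$ that is uniformly controlled in $BV_{1,1/p}$ (in particular uniformly bounded in $L^\infty$, since $BV_{1,1/p} \hookrightarrow L^\infty$). First I would observe that by assumption (10), $\tau_\eps(\xi) \le -C\log|\pi_\eps(\xi) - O_\eps|$, so $\tau_\eps$ is a function of the base coordinate alone up to this bound: setting $\hat\tau_\eps(x) := -C\log|x - O_\eps|$ on $I$, we have $\tau_\eps(\xi) \le \hat\tau_\eps(\pi_\eps(\xi))$ for all $\xi \in \Sigma$.

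Next I would push the estimate down to the $1$-d level. Since $\pi_\eps$ conjugates $F_\eps$ to $T_\eps$ and pushes $\mu_{F_\eps}$ forward to $\bar\mu_\eps$ (this is exactly the content of the displayed formula for $\mu_{F_\eps}$ defining it as the limit of $\int (\tpsi\circ F_\eps^n)^\pm_\eps \, d\bar\mu_\eps$, applied to $\tpsi$ a function of the base coordinate), we get
$$
\int_\Sigma \tau_\eps \, d\mu_{F_\eps} \le \int_\Sigma \hat\tau_\eps \circ \pi_\eps \, d\mu_{F_\eps} = \int_I \hat\tau_\eps \, d\bar\mu_\eps = \int_I -C\log|x - O_\eps| \, h_\eps(x)\, dx.
$$
To make the reduction rigorous when $\tpsi = \hat\tau_\eps\circ\pi_\eps$ is unbounded, I would apply the defining formula to the bounded truncations $\hat\tau_\eps \wedge N$, which are continuous (or can be approximated by continuous functions), note that $\pi_\eps \circ F_\eps^n = T_\eps^n \circ \pi_\eps$ makes $(\tpsi \circ F_\eps^n)^\pm_\eps = (\hat\tau_\eps\wedge N)\circ T_\eps^n$ constant along leaves, so the limit equals $\int_I (\hat\tau_\eps \wedge N) \, d\bar\mu_\eps$ by $T_\eps$-invariance of $\bar\mu_\eps$, and then let $N \to \infty$ via monotone convergence on both sides.

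Finally I would bound the $1$-d integral: since $h_\eps \in BV_{1,1/p}$ with $\|h_\eps\|_{1,1/p}$ uniformly bounded (by Lemma \ref{uniform} applied to the fixed point of $P_\eps$, together with $\|h_\eps\|_1 = 1$), and $BV_{1,1/p} \subset L^\infty$ with a uniform embedding constant, we have $\|h_\eps\|_\infty \le M$ for some $M$ independent of $\eps$; hence
$$
\int_I -C\log|x - O_\eps|\, h_\eps(x)\, dx \le C M \int_{-1/2}^{1/2} \bigl|\log|x - O_\eps|\bigr| \, dx \le C M \int_{-1}^{1} |\log|t||\, dt < \infty,
$$
since $O_\eps \in I$ and $t \mapsto \log|t|$ is integrable near $0$. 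This gives $\tau_\eps \in L^1(\Sigma, \mu_{F_\eps})$, uniformly in $\eps$ in fact. The main obstacle I anticipate is the rigorous justification of the measure-pushforward identity for the unbounded observable $\hat\tau_\eps \circ \pi_\eps$ — the defining formula for $\mu_{F_\eps}$ is stated for continuous functions, so one must carefully argue the truncation/monotone-convergence step and verify that the $\pm$ fiberwise sup/inf cause no trouble (which they do not, precisely because $\hat\tau_\eps\circ\pi_\eps$ and its truncations are already constant on leaves). A secondary point requiring a word of care is confirming the uniform $L^\infty$ bound on $h_\eps$; this is immediate from the uniform Lasota–Yorke inequality of Lemma \ref{uniform}, which forces $\{h_\eps\}$ to lie in a bounded ball of $BV_{1,1/p}$.
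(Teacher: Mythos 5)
Your proposal is correct and follows essentially the same route as the paper: truncate the return time, use the defining limit formula for $\mu_{F_\eps}$ to reduce to an integral against $h_\eps$ on $I$, invoke the uniform $L^\infty$ bound on $h_\eps$ together with the integrability of $-\log|x-O_\eps|$, and conclude by monotone convergence. Your write-up is in fact somewhat more careful than the paper's about why the fiberwise sup/inf is harmless (constancy along leaves) and about where the uniform density bound comes from.
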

\begin{proof}
Let $\tau_{N, \eps}=\min\{N, \tau_\eps\}$. Then $\tau_{N, \eps}$ is monotone increasing in $N$ and it converges to $\tau_\eps$ almost everywhere. 
Since $\tau_{N, \eps}$ is continuous and $d\bar\mu_\eps/dm$ is uniformly bounded, in $\eps$, we have 
\begin{align*}
\int \tau_{N, \eps} d\mu_{F_\eps}=\lim_{n\to \infty}\int (\tau_{N, \eps}\circ F_\eps^n)_\eps^{+}d\bar\mu_\eps =\lim_{n\to \infty}\int (\tau_{N, \eps}\circ F^n)_\eps^{+}(d\bar\mu_\eps/ dm)dm\\
\le C\|d\bar\mu_\eps/dm\|_\infty|\int_{I} \log |x- O_\eps|dx|<+\infty, 
\end{align*}
which  implies that $\lim_{N\to \infty}\int \tau_{N, \eps} d\mu_F$ exists and finite. Hence by monotone convergence  theorem $\int\tau_{\eps}d\mu_{F_\eps}<\infty.$
\end{proof}
\noindent Let 
$$
\Sigma_{\tau_\eps}=\Sigma\times [0, +\infty)/\sim,
$$
where  $\sim$  is the equivalence relation on $\Sigma\times [0, +\infty)$ generated by $(\xi, \tau_\eps(\xi))\sim (F_\eps(\xi), 0).$
Then there is a natural projection $\pi_{\tau_\eps} : \Sigma\times [0, \infty)\to \Sigma _{\tau_\eps}$ which induces a topology and a Borel $\sigma$-algebra on $\Sigma_{\tau_\eps}.$ The suspension flow of $F_\eps$ with return time $\tau_\eps$ is the semi-flow $(X_\eps^t)_{t\ge 0}$ defined on $\Sigma_{\tau_\eps}$ as 
$$
X_\eps^t(\pi_{\tau_\eps}(\xi, s))=\pi_{\tau_\eps}(\xi, s+t) \quad \text{for any} \quad (\xi, s)\in \Sigma\times [0, +\infty).
$$	
\begin{lemma} (\cite{APPV}, Lemma 6.7.)
 The suspension flow $X_\eps^t$ admits a unique invariant probability measure $\bar\mu_{X_\eps}$. Moreover, for every bounded measurable $\varphi: \Sigma_{\tau_\eps}\to \mathbb R,$ we have
$$
\int\varphi d\bar \mu_{X_\eps}=\frac {1}{\mu_{F_\eps}(\tau_\eps)}\int\int_0^{\tau_\eps(\xi)}\varphi(\pi_{\tau_\eps}(\xi, t))dtd\mu_{F_\eps}(\xi),
$$ 
where $\mu_{F_\eps}(\tau)=\int\tau_\eps d\mu_{F_\eps}.$
\end{lemma}
Now, we can define the unique SRB measure of  the original flow $X_\eps(\xi, t)$. Define 
$$
\Phi_\eps :\Sigma\times [0, +\infty)\to  U \quad \text{by letting} \quad \Phi_\eps(\xi, t)=X_\eps(\xi, t).
$$  	
Since $\Phi_\eps(\xi, \tau_\eps(\xi))=(F_\eps(\xi), 0)\in \Sigma\times\{0\}$, map $\Phi_\eps$ induces a map
\begin{equation}
\phi_\eps:\Sigma_{\tau\eps}\to U, \quad \text{such that} \quad  \phi_\eps\circ X_\eps^t=X_\eps(\cdot, t)\circ \phi_\eps, \quad \text{for} \quad t\ge 0.
\end{equation}
via the identification $\sim$. Now the invariant measure of $X_\eps^t$  is naturally transferred to an invariant measure for $X_\eps(\cdot, t)$ via pushing it forward  $\mu_{\eps}={\phi_{\eps}}_\ast\bar\mu_{X_\eps}$ (see \cite{APPV},  Section 7). Hence, we can define the SRB measure of the flow as follows:
\begin{lemma}\label{measureforflow}
The flow of each $X_\eps\in\mathcal X$ has a unique SRB measure $\mu_{\eps}$. In particular, for any continuous function $\varphi:U\to \mathbb R$
$$
\int\varphi d\mu_{\eps}=\frac {1}{\mu_{F_\eps}(\tau_\eps)}\int\int_0^{\tau_\eps(\xi)}\varphi \circ \phi_\eps \circ \pi_\eps(\xi, t)dtd\mu_{F_\eps}(\xi)
$$ 
where $\mu_{F_\eps}(\tau_\eps)=\int\tau_\eps d\mu_{F_\eps}.$
\end{lemma}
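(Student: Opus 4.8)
The plan is to assemble the statement from the three preceding constructions already recorded in this subsection: the disintegration identity defining $\mu_{F_\eps}$, the formula for the suspension measure $\bar\mu_{X_\eps}$, and the semi-conjugacy $\phi_\eps$ between the suspension flow $X_\eps^t$ and the true flow $X_\eps(\cdot,t)$. Once these are in hand, the only real work is transporting uniqueness and the SRB property up the tower $T_\eps \rightsquigarrow F_\eps \rightsquigarrow$ suspension $\rightsquigarrow$ flow.

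First I would verify invariance and derive the formula. Since $\phi_\eps\circ X_\eps^t = X_\eps(\cdot,t)\circ\phi_\eps$ and $\bar\mu_{X_\eps}$ is $X_\eps^t$-invariant, the pushforward $\mu_\eps:=(\phi_\eps)_\ast\bar\mu_{X_\eps}$ is invariant under the flow. For continuous $\varphi:U\to\RR$ the change-of-variables formula for pushforwards gives $\int\varphi\,d\mu_\eps=\int \varphi\circ\phi_\eps\,d\bar\mu_{X_\eps}$; substituting the expression for $\bar\mu_{X_\eps}$ from the preceding lemma and using $\phi_\eps\circ\pi_{\tau_\eps}(\xi,t)=X_\eps(\xi,t)$ yields the displayed identity, with $\mu_{F_\eps}(\tau_\eps)=\int\tau_\eps\,d\mu_{F_\eps}$ finite by the previous lemma.

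Next, uniqueness, via the standard chain. The map $T_\eps$ has a unique absolutely continuous invariant probability $\bar\mu_\eps$: existence is classical, and uniqueness follows from transitivity of $T_\eps$ together with the uniform Lasota--Yorke estimate of Lemma \ref{uniform}, which produces a spectral gap for $P_\eps$ on $BV_{1,1/p}$ and hence a one-dimensional fixed space (this is exactly the mechanism used in Proposition \ref{1dstability}). Because $g_\eps$ is uniformly contracting along the leaves of $\mathcal F_\eps$, the limit $\lim_n\int(\tpsi\circ F_\eps^n)^{\pm}_\eps\,d\bar\mu_\eps$ exists for every continuous $\tpsi$ and defines the unique $F_\eps$-invariant probability $\mu_{F_\eps}$ whose projection under $\pi_\eps$ is $\bar\mu_\eps$; this is the unique physical (SRB) measure of $F_\eps$. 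The suspension lemma already gives uniqueness of $\bar\mu_{X_\eps}$ among $X_\eps^t$-invariant probabilities on $\Sigma_{\tau_\eps}$, and $\phi_\eps$ is essentially injective and maps $\Sigma_{\tau_\eps}$ onto a subset of the trapping region $U$ of full measure for any physical measure supported on $\Lambda_{X_\eps}$; hence $\mu_\eps$ is the unique SRB measure of the flow.

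Finally the SRB property. Since $\bar\mu_\eps \ll m$ on $I$ with bounded density, and using the contraction along stable leaves of $\mathcal F_\eps$, the conditional measures of $\mu_{F_\eps}$ along the unstable curves of $F_\eps$ are equivalent to arclength. Saturating by the flow direction --- integrating over $[0,\tau_\eps(\xi))$ precisely as in the displayed formula --- turns these into conditional measures of $\mu_\eps$ along the two-dimensional unstable manifolds of $X_\eps$ that are absolutely continuous with respect to the induced Riemannian volume, which is the defining property of an SRB measure. I expect the only place that needs care to be this last step: tracking how the bounded density $d\bar\mu_\eps/dm$ and the leafwise Jacobians of $F_\eps$ combine into genuinely absolutely continuous conditionals on the curved unstable manifolds, and checking that the non-injectivity of $\pi_{\tau_\eps}$ and the singularity of the flow at the equilibrium do not interfere. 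All of this is standard for geometric Lorenz flows and can be invoked from \cite{APPV}.
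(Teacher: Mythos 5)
Your proposal is correct and follows exactly the route the paper itself takes: the paper offers no separate proof of this lemma, but obtains it as the culmination of the preceding construction ($\bar\mu_\eps \rightsquigarrow \mu_{F_\eps} \rightsquigarrow \bar\mu_{X_\eps} \rightsquigarrow \mu_\eps = (\phi_\eps)_\ast\bar\mu_{X_\eps}$), with the displayed identity coming from the change-of-variables formula applied to the suspension measure and the semi-conjugacy $\phi_\eps\circ X_\eps^t = X_\eps(\cdot,t)\circ\phi_\eps$, and with uniqueness and the SRB property delegated to \cite{APPV}, Section 7. Your version simply writes out these standard steps (and correctly reads the paper's $\pi_\eps$ in the display as the suspension projection $\pi_{\tau_\eps}$), so there is nothing to add.
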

The proof of  2) of Theorem \ref{main}, then proceeds as follows. We first note that Lemma \ref{uniform} implies that the densities $h_\eps$ are in $BV_{1,1/p}$ and hence in $L^{\infty}$. Then by our Proposition \ref{1dstability} above and Proposition 3.3 of \cite{AS} we obtain that the Poincar\'e map is statistically stable.   Then statistical stability of the Poincar\'e map is first lifted to the suspension flow and finally to the original flow.  Notice that the key ingredients in the proof of Propositions 3.3  and  Lemma 4.2. in \cite{AS} are that the densities $h_\eps$ are in $L^{\infty}$, the compactness of the Poincar\'e section and the fact that the Lebesgue measure of the set where $\tau_\eps> n$ decays sufficiently fast,  which is the case of our setting.  

\end{document}